\documentclass[10pt]{amsart}
\usepackage{geometry} 
\geometry{a4paper} 


\usepackage{latexsym,amssymb,amsmath,amsfonts,amsthm,exscale}
\usepackage{graphicx, graphics}

\usepackage[bookmarks=true,colorlinks=true,citecolor=green]{hyperref}

\usepackage[ruled,vlined,lined,boxed,commentsnumbered,norelsize]{algorithm2e}

\usepackage{listings}
\usepackage{color,xcolor}
\usepackage{floatflt}
\usepackage{tikz}
\usepackage{url}
\usepackage{enumerate}  
\usepackage{enumitem}
\usepackage{fancyhdr}
\usepackage{mathrsfs}

 \usepackage{graphicx}
 \usetikzlibrary{shapes,snakes,arrows,patterns,trees}

\usepackage{amssymb}
\usepackage{amsthm}
\usepackage{amsmath}

\theoremstyle{plain}
\newtheorem{theorem}{Theorem}[section]

\newtheorem{remark}{Remark}[section]
\newtheorem{prop}[theorem]{proposition}

\newtheorem{example}{Example}

\definecolor{mygreen}{rgb}{0,0.6,0}
\definecolor{mygray}{rgb}{0.5,0.5,0.5}
\definecolor{mymauve}{rgb}{0.58,0,0.82}
\lstset{ %
  backgroundcolor=\color{white},   
  basicstyle=\footnotesize,        
  breakatwhitespace=false,         
  breaklines=true,                 
  captionpos=b,                    
  commentstyle=\color{mygreen},    
  deletekeywords={...},            
  escapeinside={\%*}{*)},          
  extendedchars=true,              
  frame=single,                    
  keepspaces=true,                 
  keywordstyle=\color{blue},       
  language=Scilab,                 
  morekeywords={*,...},            
  numbers=left,                    
  numbersep=5pt,                   
  numberstyle=\tiny\color{mygray}, 
  rulecolor=\color{black},         
  showspaces=false,                
  showstringspaces=false,          
  showtabs=false,                  
  stepnumber=1,                    
  stringstyle=\color{mymauve},     
  tabsize=2,                       
  title=\lstname                   
}

\usepackage{lipsum}
\makeatletter
\g@addto@macro{\endabstract}{\@setabstract}
\newcommand{\authorfootnotes}{\renewcommand\thefootnote{\@fnsymbol\c@footnote}}%
\makeatother

\DeclareMathOperator{\bpi}{\boldsymbol{\pi}}

\title[Multi-steepest descent algorithm]{A new approach to improve ill-conditioned parabolic optimal control problem via time domain decomposition}

\begin{document}
\maketitle

\begin{center}
  \LARGE 
   \par \bigskip

  \normalsize
  \authorfootnotes
  Mohamed Kamel RIAHI\footnote{Mohamed Kamel RIAHI : \email{\href{mailto:riahi@njit.edu}{\tt riahi@njit.edu}} \url{http://web.njit.edu/~riahi} }\textsuperscript{1}
  \par \bigskip

  \textsuperscript{1}
  Department of mathematical science, New Jersey Institute of Technology, University Heights Newark, New Jersey, USA.
  \par	\bigskip

  \today
\end{center}


\begin{abstract}
In this paper we present a new steepest-descent type algorithm for convex optimization problems. Our algorithm pieces the unknown into sub-blocs of unknowns and considers a partial optimization over each sub-bloc. In quadratic optimization, our method involves Newton technique to compute the step-lengths for the sub-blocs resulting descent directions. Our optimization method is fully parallel and easily implementable, we first presents it in a general linear algebra setting, then we highlight its applicability to a parabolic optimal control problem, where we consider the blocs of unknowns with respect to the time dependency of the control variable. The parallel tasks, in the last problem, turn``on" the control during a specific time-window and turn it ``off" elsewhere. We show that our algorithm significantly improves the computational time compared with recognized methods. Convergence analysis of the new optimal control algorithm is provided for an arbitrary choice of partition. Numerical experiments are presented to illustrate the efficiency and the rapid convergence of the method.
\end{abstract}

\keywords{Steepest descent method, Newton method, ill conditioned Optimal control, time domain decomposition.} 

\section{Introduction}
	Typically the improvement of iterative methods is based on an implicit transformation of the original linear system in order to get a new system which has a condition number ideally close to one see~\cite{evans1983preconditioning,grote1997parallel,saad2003iterative} and references therein. This technique is known as preconditioning. Modern preconditioning techniques such as algebraic multilevel e.g.~\cite{malas2007incomplete,rude1993mathematical} and domain decomposition methods e.g.~\cite{quarteroni1999domain,toselli2005domain,bjorstad2004domain,lions2001parareal} attempt to produce efficient tools to accelerate convergence. Other techniques have introduced a different definition of the descent directions, for example, CG-method, GMRES, FGMRES, BFGS, or its limited memory version l-BFGS see for instance~\cite{saad2003iterative}. Others approaches (e.g.~\cite{MR2921091},\cite{Grippo86anonmonotone} and \cite{MR2904364} without being exhaustive) propose different formulas for the line-search in order to enhance the optimization procedure. 
	
	The central investigation of this paper is the enhancement of the iterations of the steepest descent algorithm via an introduction of a new formulation for the line-search. Indeed, we show how to achieve an optimal vectorized step-length for a given set of descent directions. Steepest descent methods~\cite{Cauchy1847Methode} are usually used for solving, for example, optimization problems, control with partial differential equations (PDEs) constraints and inverse problems. Several approaches have been developed in the cases of constrained and unconstrained optimization. 
	
	It is well-known that the algorithm has a slow convergence rate with ill-conditioned problems because the number of iterations is proportional to the condition number of the problem. The method of J.Barzila and J.Borwein~\cite{Barzilai88twopoint} based on two-point step-length for the steepest-descent method for approximating the secant equation avoids this handicap. Our method is very different because first, it is based on a decomposition of the unknown and proposes a set of bloc descent directions, and second because it is general where it can be coupled together with any least-square-like optimization procedure. 	 
	 
	 The theoretical basis of our approach is presented and applied to the optimization of a positive definite quadratic form. Then we apply it on a complex engineering problem involving control of system governed by PDEs. We consider the optimal heat control which is known to be ill-posed in general (and well-posed under some assumptions) and presents some particular theoretical and numerical challenges. We handle the ill-posedness degree of the heat control problem by varying the regularization parameter and apply our methods in the handled problem to show the efficiency of our algorithm. The distributed- and boundary-control cases are both considered.

	This paper is organized as follows: In Section 2, we present our method in a linear algebra framework to highlight its generality.  Section 3 is devoted to the introduction of the optimal control problem with constrained PDE on which we will apply our method. We present the Euler-Lagrange-system associated to the optimization problem and give the explicit formulation of the gradient in both cases of distributed- and boundary-control. Then, we present and explain the parallel setting for our optimal control problem. In Section 4, we perform the convergence analysis of our parallel algorithm. In Section 5, we present the numerical experiments that demonstrate the efficiency and the robustness of our approach. We make concluding remarks in Section 6. For completeness, we include calculus results in the Appendix.	

	Let $\Omega$ be a bounded domain in $\mathbb{R}^{3}$, and $\Omega_c\subset\Omega$, the boundary of $\Omega$ is denoted by $\partial\Omega$. We denote by $\Gamma\subset\partial\Omega$ a part of this boundary. We denote $\langle . , .\rangle_{2}$ (respectively $\langle . , .\rangle_{c}$ and $\langle . , .\rangle_{\Gamma}$) the standard $L^2(\Omega)$   (respectively $L^2(\Omega_c)$ and $L^2(\Gamma)$) inner-product that induces the $L^2(\Omega)$-norm $\|.\|_{2}$ on the domain $\Omega$ (respectively $\|\cdot\|_{c}$ on $\Omega_c$ and $\|\cdot\|_{\Gamma}$ on $\Gamma$).
	
	 In the case of finite dimensional vector space in $\mathbb{R}^m$, the scalar product $a^Tb$ of $a$ and $b$ (where $a^T$ stands for the transpose of $a$) is denoted by $\langle . , .\rangle_{2}$ too.
The scalar product with respect to the matrix $A$, i.e. $\langle x,Ax\rangle_{2}$ is denote by $\langle x,x\rangle_{A}$ and its induced norm is denoted by $\|x\|_{A}$. 
The transpose of the operator $A$ is denoted by $A^T$. The Hilbert space $L^2(0,T;L^2(\Omega_c))$ (respectively $L^2(0,T;L^2(\Gamma))$) is endowed by the scalar product $\langle . , .\rangle_{c,I}$ ( respectively $\langle . , .\rangle_{\Gamma,I}$) that induces the norm $\|.\|_{c,I}$ (respectively $\|.\|_{\Gamma,I}$). 
\section{Enhanced steepest descent iterations}
	The steepest descent algorithm minimizes at each iteration the quadratic function $q(x)=\|x-x^\star\|_A^2$, where $A$ is assumed to be a symbiotic positive definite (SPD) matrix and $x^\star$ is the minimum of $q$. The vector $-\nabla q(x)$ is locally the descent direction that yields the fastest rate of decrease of the quadratic form $q$. Therefore all vectors of the form $x+\theta\nabla q(x)$, where $\theta$ is a suitable negative real value, minimize $q$. The choice of $\theta$ is found by looking for the $\min_{s<0}q(x+s\nabla q(x))$ with the use of a line-search technique. In the case where $q$ is a quadratic form $\theta$ is given by $-{\|\nabla q(x)\|_2^2}\slash{\|\nabla q(x)\|_A^2}$. We recall in Algorithm~\ref{algoSD} the steepest descent algorithm; $Convergence$ is a boolean variable based on estimation of the residual vector $r^k<\epsilon$, where $\epsilon$ is the stopping criterion.
\vspace{.2in}
\begin{algorithm}[!htbp]
\SetAlgoLined
\LinesNumbered
\KwIn{ $x_{}^0$\;}
$k=0$\;
\While{\text{Convergence}}
{
   $r^{k}=\nabla q^{k}:=\nabla q(x^{k})$\;
   Compute $Ar^{k}$\;
   Compute $\theta^{k}=-\|r^{k}\|_2^2\slash \|r^{k}\|_A^2$\;
   $x^{k+1}=x^{k}+\theta^{k} r^{k}$\;
   $k=k+1$\;
}
\caption{Steepest descent.\label{algoSD}}
\end{algorithm}\vspace{.2in}

	Our method proposes to modify the stepe $5.$ of Algorithm~\ref{algoSD}. It considers the step-length $\theta\in\mathbb{R}_{-}\backslash\{0\}$ as a vector in $\mathbb{R}^{\hat n}_{-}\backslash\{{\bf0}\}$ where $\hat n$ is an integer such that $1\leq\hat n\leq size(x)$, we shall denote this new vector as $\Theta_{\hat n}$. 
	
	In the following, it is assumed that for a giving vector $x\in\mathbb{R}^{m}$, the integer $\hat n$ divides $m$ with null rest. In this context, let us introduce the identity operators $I_{\mathbb{R}^{m}}$ witch is an $m$-by-$m$ matrix and its partition (partition of unity) given by the projection operators $\{\bpi_n\}_{n=1}^{\hat n}$ : projectors from $\mathbb{R}^{m}$ into a set of canonical basis $\{e_i\}_i$. These operators are defined  for $1\leq n\leq \hat n$ by 
	$$\begin{array}{ll}
	\bpi_n&:\mathbb{R}^{m}\rightarrow\mathbb{R}^{\frac{m}{\hat n}}\\
	&x\mapsto \bpi_n(x)=\displaystyle\sum_{i=(n-1)\times\frac{m}{\hat n}+1}^{n\times\frac{m}{\hat n}} \langle e_i,x\rangle_2 \, e_i.	\end{array}
	$$
	 For reading conveniences, we define $\tilde x_n$ a vector in $\mathbb{R}^{m}$ such that $\tilde x_n:=\bpi_n(x)$. The concatenation of $\tilde x_n$ for all $1\leq n\leq \hat n$ is denoted by 
	$$\hat x_{\hat n}=\bigoplus_{n=1}^{\hat n} \bpi_n (x)=\bigoplus_{n=1}^{\hat n} \tilde x_{n}   \in \mathbb{R}^{m}.$$
We remark that $\bpi_n$ satisfy $\bigoplus_{n=1}^{\hat n} \bpi_n = I_{\mathbb{R}^{m}}$.

	 Recall the gradient $\nabla_{x} =(\frac{\partial}{\partial x_{1}},\dots,\frac{\partial}{\partial  x_{m}})^T$, and define the bloc gradient 
	 $\nabla_{\hat x_{\hat n}}=\left(\nabla_{\tilde x_{1}}^T,\dots,\nabla_{\tilde x_{\hat n}}^T \right)^T$, where obviously $\nabla_{\tilde x_{n}}^T=(\frac{\partial}{\partial x_{(n-1)\times\frac{m}{\hat n}+1}},\dots,\frac{\partial}{\partial  x_{n\times\frac{m}{\hat n}}})^T$.
	 In the spirit of this decomposition we investigate, in the sequel, the local descent directions as the bloc partial derivatives with respect to the bloc-variables $(\tilde x_n)_{n=1}^{n=\hat n}$. We aim, therefore, at finding $\Theta_{\hat n}=(\theta_1,\dots,\theta_{\hat n})^T\in\mathbb{R}^{\hat n}$ that ensures the $\min_{(\theta_n)_n<0}q \left(\hat  x_{\hat n}^{k}+\bigoplus_{n=1}^{\hat n} \theta_{n}\nabla_{\tilde x_{n}} q(\hat  x_{\hat n}^{k})\right)$.

We state hereafter a motivating result, which its proof is straightforward because the spaces are embedded. Let us first, denote by 
\begin{equation}\label{eqPhiAlg}
\begin{array}{ll}
\Phi_{\hat n}(\Theta_{\hat n}):&\mathbb R^{\hat n}\to\mathbb R_{+}\\
&\Theta_{\hat n}\mapsto q \left(\hat  x_{\hat n}^{}+\bigoplus_{n=1}^{\hat n} \theta_{n}\nabla_{\tilde x_{n}} q(\hat  x_{\hat n}^{})\right)
\end{array}\end{equation}
which is quadratic because $q$ is. 
\begin{theorem}\label{rmkmotivate}
 According to the definition of $\Phi_{\hat n}(\Theta_{\hat n})$ (see Eq.\eqref{eqPhiAlg}) we immediately have 
$$
\min_{\mathbb{R}^{p}} \Phi_{p}(\Theta_{p}) \leq \min_{\mathbb{R}^{q}} \Phi_{q}(\Theta_{q}) \quad \forall q<p.
$$
\end{theorem}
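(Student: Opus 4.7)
The plan is to exhibit an inclusion $\iota : \mathbb R^{q} \hookrightarrow \mathbb R^{p}$ under which $\Phi_{p}\circ \iota = \Phi_{q}$; once this identity is in hand, the inequality drops out because the minimum of a function over a set dominates its minimum over any subset of that set. This is exactly the ``spaces are embedded'' argument the author alludes to.

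First I would unwind the definition of $\Phi_{\hat n}$. The partition of unity $\bigoplus_{n=1}^{\hat n}\bpi_n = I_{\mathbb R^{m}}$ gives $\hat x_{\hat n}=x$ for every admissible $\hat n$, and the perturbation $\bigoplus_{n=1}^{\hat n}\theta_n\nabla_{\tilde x_n}q(x)$ is nothing but the vector $\nabla q(x)\in\mathbb R^{m}$ with its $n$-th block of $m/\hat n$ consecutive coordinates rescaled by $\theta_n$. Writing this as $D_{\hat n}(\Theta_{\hat n})\,\nabla q(x)$, where $D_{\hat n}(\Theta_{\hat n})$ is the $m\times m$ diagonal matrix whose diagonal is piecewise constant on the $\hat n$-blocks, one obtains
\[
\Phi_{\hat n}(\Theta_{\hat n}) \;=\; q\bigl(x + D_{\hat n}(\Theta_{\hat n})\,\nabla q(x)\bigr).
\]
In this reformulation it is already visible that as $\hat n$ grows, the family of admissible diagonals $\{D_{\hat n}(\Theta_{\hat n})\colon\Theta_{\hat n}\in\mathbb R^{\hat n}\}$ grows, so more blocs correspond to a strictly richer search set.

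Under the natural divisibility $q\mid p\mid m$ (implicit in the standing assumption that each partition size divides $m$, and needed for the $p$-blocs to refine the $q$-blocs), each $q$-block decomposes as a disjoint union of $p/q$ consecutive $p$-blocks. I would then set
\[
\iota(\theta_1,\dots,\theta_q)\;=\;\bigl(\underbrace{\theta_1,\dots,\theta_1}_{p/q},\,\ldots,\,\underbrace{\theta_q,\dots,\theta_q}_{p/q}\bigr)\in\mathbb R^{p}.
\]
A direct coordinate count gives $D_{p}(\iota(\Theta_q)) = D_{q}(\Theta_q)$, hence $\Phi_{p}(\iota(\Theta_q)) = \Phi_{q}(\Theta_q)$ for all $\Theta_q\in\mathbb R^{q}$, and the conclusion follows:
\[
\min_{\mathbb R^{p}}\Phi_{p}\;\leq\;\min_{\Theta_q\in\mathbb R^{q}}\Phi_{p}(\iota(\Theta_q))\;=\;\min_{\mathbb R^{q}}\Phi_{q}.
\]

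Nothing about the quadratic nature of $q$ is actually used; the entire content is bookkeeping of indices. The single non-trivial point, which I take to be the ``main obstacle'', is precisely this bookkeeping: verifying $D_{p}\circ\iota = D_{q}$ coordinate-by-coordinate, which relies on the divisibility $q\mid p$. Without this alignment the $q$-blocs are not unions of whole $p$-blocs, $\iota$ has no canonical definition respecting the block structure, and the elementary ``embedded spaces'' reasoning that makes the theorem ``straightforward'' collapses.
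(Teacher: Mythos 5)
Your proof is correct and is exactly the formalization of the paper's one-line argument (``straightforward because the spaces are embedded''): you make the embedding $\iota:\mathbb{R}^{q}\hookrightarrow\mathbb{R}^{p}$ explicit and verify $\Phi_{p}\circ\iota=\Phi_{q}$, from which the inequality of minima is immediate. You also rightly flag that the argument needs $q\mid p$ so that the $q$-blocs are unions of $p$-blocs — a hypothesis the theorem's ``$\forall q<p$'' silently omits and without which the nesting of the search sets (and hence the claim) can fail.
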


The new algorithm we discuss in this paper proposes to define a sequence $(\hat x^k_{\hat n})_k$ of vectors that converges to $x^{\star}$ unique minimizer of the quadratic form $q$. The update formulae reads:
$$\tilde x^{k+1}_{n} =  \tilde  x_{n}^{k}+ \theta_{n}^k\nabla_{\tilde x_{n}} q(\hat  x_{\hat n}^{k}),$$
where we recall that $\hat n$ is an arbitrarily chosen integer. Then $\hat x^{k+1}_{\hat n} = \bigoplus_{n=1}^{\hat n}\tilde x^{k+1}_{n}$.

	We shall explain now how one can accurately computes the vector step-length $\Theta_{\hat n}^k$ at each iteration $k$. It is assumed that $q$ is a quadratic form. From Eq.\eqref{eqPhiAlg} using the chain rule, we obtain the Jacobian vector $\Phi_{\hat n}'(\Theta_{\hat n})\in\mathbb{R}^{\hat n}$ given by
\begin{equation}\label{dev1}	
	\left(\Phi_{\hat n}'(\Theta_{\hat n})\right)_{j} = \left(\nabla_{\tilde x_{j}} q (\hat x_{\hat n}^{k})\right)^T \nabla_{\tilde x_j} q  \left(\hat  x_{\hat n}^{k}+\bigoplus_{n=1}^{\hat n} \theta_{n}\nabla_{\tilde x_{n}} q(\hat  x_{\hat n}^{k})\right)\in\mathbb{R},
	\end{equation}
	 and the Hessian matrix $\Phi_{\hat n}''(\Theta_{\hat n})\in\mathbb{R}^{\hat n\times\hat n}$ is given by 
	\begin{equation*}
	\left(\Phi_{\hat n}''(\Theta_{\hat n})\right)_{i,j}=\left(\nabla_{\tilde x_{j}} q (\hat x_{\hat n}^{k})\right)^T \Big( \nabla_{\tilde x_{i}}\nabla_{\tilde x_j} q  \left(\hat  x_{\hat n}^{k}+\bigoplus_{n=1}^{\hat n} \theta_{n}\nabla_{\tilde x_{n}} q(\hat  x_{\hat n}^{k})\right) \Big) \nabla_{\tilde x_{j}} q (\hat x_{\hat n}^{k}).
	\end{equation*}
	It is worth noticing that the matrix $\nabla_{\tilde x_{i}}\nabla_{\tilde x_j} q  \left(\hat  x_{\hat n}^{k}+\bigoplus_{n=1}^{\hat n} \theta_{n}\nabla_{\tilde x_{n}} q(\hat  x_{\hat n}^{k})\right)$ is a bloc portion of the Hessian matrix $A$. However if the gradient $\nabla_{\tilde x_{n}}q\in\mathbb{R}^{\frac{m}{\hat n}}$ assumes an extension by zero (denoted by $\widetilde\nabla_{\tilde x_{i}}q$) to $\mathbb{R}^{m}$ so the matrix $\Phi_{\hat n}''(\Theta_{\hat n})$ has therefore the simplest implementable form 
	\begin{equation}\label{dev2}
	\left(\Phi_{\hat n}''(\Theta_{\hat n})\right)_{i,j}=(\widetilde\nabla_{\tilde x_{j}} q (\hat x_{\hat n}^{k}))^T  A \widetilde\nabla_{\tilde x_{j}} q (\hat x_{\hat n}^{k}).
	\end{equation}
 We thus have the expansion  $\Phi_{\hat n}(\Theta_{\hat n}^{k})=\Phi_{\hat n}({\bf 0})+(\Theta_{\hat n}^{k})^T\Phi_{\hat n}({\bf 0})+\frac 1 2 (\Theta_{\hat n}^{k})^T\Phi_{\hat n}''({\bf 0})\Theta_{\hat n}^{k}$, with ${\bf 0}:=(0,..,0)^T\in\mathbb{R}^{\hat n}$. Then the vector $\Theta_{\hat n}^{k}$ that annuls the gradient writes: 
\begin{equation}\label{theta_algebra}
\Theta_{\hat n}^k= -\Phi_{\hat n}''({\bf 0})^{-1}\Phi_{\hat n}'({\bf 0}).
\end{equation}
Algorithm~\ref{algoSD} has therefore a bloc structure which can be solved in parallel. This is due to the fact that partial derivatives can be computed independently. The new algorithm is thus as follows (see Algorithm~\ref{LASD})

\vspace{.2in}
\begin{algorithm}[htbp]
\SetAlgoLined
\LinesNumbered
$k=0$\;
\KwIn{ $\hat x_{\hat n}^0\in\mathbb{R}^m$\;}
\While{\text{Convergence}}
{
  \ForAll{$1\leq n \leq\hat n$}
  { 
  	$\tilde x_{n}^{k}=\bpi_{n}(\hat x_{\hat n}^k)$\;
  	$r_n=\nabla_{\tilde x_{n}^{k} } q(\hat x_{\hat n}^k)$\;
	$resize(r_n)$ (i.e. extension by zero means simply project on $\mathbb{R}^m$)\;
  }
   Assemble $\Phi_{\hat n}'({\bf0})$ with element $ \left(\Phi_{\hat n}'({\bf0})\right)_j = r_j^Tr_j$ according to Eq.\eqref{dev1}\;
   Assemble $\Phi_{\hat n}''({\bf 0})$ with element $\left(\Phi_{\hat n}''({\bf 0})\right)_{i,j}= r_{i} ^T A  r_{j}$  according to Eq.\eqref{dev2}\;
   Compute $\Theta_{\hat n}^{k}$ solution of Eq.\eqref{theta_algebra}\;
   Update	$\hat x^{k+1}_{\hat n} =  \hat  x_{\hat n}^{k}+ \bigoplus_{n}\theta_{n}\nabla_{\tilde x_{n}} q(\hat  x_{\hat n}^{k})$\;
   $k=k+1$\;
}
\caption{Enhanced steepest descent.\label{LASD}}
\end{algorithm}\vspace{.2in}

%
\section{Application to a parabolic optimal control problem}\label{AtPOCP}
In this part we are interested in the application of Algorithm \ref{LASD} in a finite element computational engineering problem involving optimization with constrained PDE. In particular, we deal with the optimal control problem of a system, which is governed by the heat equation. We shall present two types of control problems. The first concerns the distributed optimal control and the second concerns the Dirichlet boundary control. The main difference from the algorithm just presented in linear algebra is that the decomposition is applied on the time domain when the control. This technique is not classical, we may refer to a similar approaches that has been proposed for the time domain decomposition in application to the control problem, for instance~\cite{MR1931522,doi:10.1137/050647086,parareal_control_kamel} which basically they use a variant of the parareal in time algorithm~\cite{lions2001parareal}. 
\subsection{Distributed optimal control problem}\label{DistBCP}
Let us briefly present the steepest descent method applied to the following optimal control problem: find $v^\star$ such that
\begin{equation}\label{pb1}
  J(v^\star)=\min_{v\in L^2(0,T;L^2(\Omega_{c}))}   J(v),
\end{equation}
  where $J$ is a quadratic cost functional defined by
\begin{equation}\label{funcJ}
   J(v)= \frac{1}{2} \|  y(T)-y^{target}\|^{2}_{2} + \frac{\alpha}{2} \int_{I} \|v\|^{2}_{c} dt,
\end{equation}
where $y^{target}$ is a given target state and $y(T)$ is the state variable at time $T>0$ of the heat equation controlled by the  variable $v$ over $I:=[0,T]$. The Tikhonov regularization parameter $\alpha$ is introduced to penalize the control's $L^2$-norm over the time interval $I$. The optimality system of our problem reads: 
\begin{eqnarray}
&\left\{\begin{array}{ll}\label{fwd}
\partial_{t} y - \sigma\Delta y = \mathcal B v,\quad\text{on }I\times\Omega,\\
y(t=0)=y_{0}.
\end{array}\right.\\
&\left\{\begin{array}{ll}\label{bwd}
\partial_{t} p + \sigma\Delta p=0,\quad\text{ on }I\times\Omega,\\
p(t=T)= y(T)-y^{target}.\end{array}\right.\\
&\quad\nabla   J(v)=\alpha v+\mathcal{B}^Tp=0,\text{ on }I\times\Omega.\label{gradOmega}
\end{eqnarray}
In the above equations, the operator $\mathcal B$ is a linear operator that distributes the control in $\Omega_c$, obviously $\mathcal B$ stands for the indicator of $\Omega_c\subset\Omega$, the state variable $p$ stands for the Lagrange multiplier (adjoint state) solution of the backward heat equation Eq.\eqref{bwd}, Eq.\eqref{fwd} is called the forward heat equation.

\subsection{Dirichlet boundary optimal control problem}\label{DirichBCP}
In this subsection we are concerned with the PDE constrained Dirichlet boundary optimal control problem, where we aim at minimizing the cost functional $J_\Gamma$ defined by
\begin{equation}\label{funcJGamma}
   J_\Gamma(v_\Gamma)= \frac{1}{2} \|  y_{\Gamma}(T)-y^{target}\|^{2}_{2} + \frac{\alpha_{}}{2} \int_{I} \|v_\Gamma\|^{2}_{\Gamma} dt,
\end{equation}
where the control variable $v_{\Gamma}$ is only acting on the boundary $\Gamma\subset\partial\Omega$. Here too, $y^{target}$ is a given target state (not necessary equal the one defined in the last subsection ! ) and $y_{\Gamma}(T)$ is the state variable at time $T>0$ of the heat equation controlled by the variable $v_{\Gamma}$ during the time interval $I:=[0,T]$. As before $\alpha_{}$ is a regularization term. The involved optimality system reads 
 \begin{eqnarray}
&&\left\{\begin{array}{ccll}
\partial_{t} y_\Gamma - \sigma\Delta y_\Gamma &=&f&\quad\text{on } I\times\Omega\\
y_\Gamma&=&v_\Gamma &\quad\text{on } I\times\Gamma\\
y_\Gamma&=&g &\quad\text{on } I\times\{\partial\Omega\backslash\Gamma\}\\
y_\Gamma(0)&=&y_{0}&
\end{array}\right.\\
&&\left\{\begin{array}{ccll}\label{bwdGamma}
\partial_{t} p_\Gamma + \sigma\Delta p_\Gamma &=& 0& \text{ on } I\times\Omega\\
p_\Gamma&=& 0 & \text{ on } I\times\partial\Omega\\
p_\Gamma(T) &=& &y_\Gamma(T)-y^{target}
\end{array}\right.\\
&&\nabla J_\Gamma(v_\Gamma) = \alpha v_\Gamma - (\nabla p_\Gamma)^T\vec{n}=0 \quad\text{ on }\ I\times\Gamma,\label{gradGamma}
\end{eqnarray}
where $f\in L^2(\Omega)$ is any source term, $g\in L^2(\Gamma)$ and $\vec n$ is the outward unit normal on $\Gamma$. the state variable $p_{\Gamma}$ stands for the Lagrange multiplier (adjoint state) solution of the backward heat equation Eq.\eqref{bwdGamma}. Both functions $f$ and $g$ will be given explicitly for each numerical test that we consider in the numerical experiment section.

\subsection{Steepest descent algorithm for optimal control of constrained PDE }
In the optimal control problem, the evaluation of the gradient as it is clear in Eq.\eqref{gradOmega} (respectively \eqref{gradGamma}) requires the evaluation of the time dependent Lagrange multiplier $p$ (respectively $p_\Gamma$). This fact, makes the steepest descent optimization algorithm slightly differs from the  Algorithm \ref{algoSD} already presented.

	Let us denote by $k$ the current iteration superscript. We suppose that $v^0$ is  known. The first order steepest descent algorithm updates the control variable as follows:
\begin{equation}\label{updatev}
v^{k}=v^{k-1} + \theta^{k-1}\nabla J(v^{k-1}),\text{ for } k\geq 1, \quad\text{for the distributed control}
\end{equation}
respectively as 
\begin{equation}\label{updatev}
v^{k}_{\Gamma}=v^{k-1}_{\Gamma} + \theta_{\Gamma}^{k-1}\nabla J_{\Gamma}(v_{\Gamma}^{k-1}),\text{ for } k\geq 1,\quad\text{for the Dirichlet control}
\end{equation}
The step-length $\theta^{k-1}\in\mathbb{R}^-\backslash\{0\}$ in the direction of the gradient $\nabla J(v^{k-1})=\alpha v^{k-1}+\mathcal{B}^Tp^{k-1}$ (respectively $\nabla J_{\Gamma}(v_{\Gamma}^{k-1})=\alpha v_\Gamma -(\nabla p_\Gamma)^T\vec{n}$) is computed as :
 $$\theta^{k-1}=-\|\nabla  J(v^{k-1})\|^{2}_{c,I}
 \slash \|\nabla J(v^{k-1})\|^2_{\nabla^2J}\quad\text{for the distributed control}.$$ 
 respectively as
 $$\theta^{k-1}_{\Gamma}=-\|\nabla  J_{\Gamma}(v_{\Gamma}^{k-1})\|^{2}_{c,I}
 \slash \|\nabla J_{\Gamma}(v_{\Gamma}^{k-1})\|^2_{\nabla^2J_{\Gamma}}\quad\text{for the Dirichlet control}.$$  
 The above step-length $\theta^{k-1}$ (respectively $\theta_{\Gamma}^{k-1}$) is optimal (see e.g. \cite{MR680778}) in the sense that it minimizes the functional $\theta\rightarrow J(v^{k-1}+\theta\nabla J(v^{k-1}))$ (respectively $\theta\rightarrow J_{\Gamma}(v_{\Gamma}^{k-1}+\theta\nabla J_{\Gamma}(v_{\Gamma}^{k-1}))$). The rate of convergence of this technique is $\big(\frac{\underline\kappa-1}{\underline\kappa+1}\big)^{2}$, where $\underline\kappa$ is the condition number of the quadratic form, namely the Hessian of the cost functional $J$ (respectively $J_{\Gamma}$).

\subsection{Time-domain decomposition algorithm}
\label{riahi_mini_15_sec:3}
	Consider $\hat n$ subdivisions of the time interval $I=\displaystyle\cup_{n=1}^{\hat n} I_{n}$, consider also the following convex cost functional $ J$: 
\begin{eqnarray}\label{funcJplusieursvariable1}
  J(v_{1},v_{2},..,v_{\hat n})=\frac 1 2 \|\mathcal Y(T)-y^{target}\|_{2}^2 +\frac \alpha 2 \sum_{n=1}^{\hat n}\int_{I_{n}}\|v_{n}\|^2_{c} dt,\\
\label{funcJplusieursvariable2}    J_{\Gamma}(v_{1,\Gamma},v_{2,\Gamma},..,v_{\hat n,\Gamma})=\frac 1 2 \|\mathcal Y_{\Gamma}(T)-y^{target}\|_{2}^2 +\frac{\alpha_{}}{2} \sum_{n=1}^{\hat n}\int_{I_{n}}\|v_{n}\|^2_{\Gamma} dt,
\end{eqnarray}
where $v_{n}, n=1,...,\hat n$ are control variables with time support  included in $I_{n}, n=1,...,\hat n$. The state $\mathcal Y(T)$ (respectively $\mathcal Y_{Gamma}$) stands for the sum of state variables $\mathcal Y_{n}$ (respectively $\mathcal Y_{n,\Gamma}$) which are time-dependent state variable solution to the heat equation controlled by the variable $v_n$ (respectively $v_{n,\Gamma}$). . Obviously because the control is linear the state $\mathcal{Y}$ depends on the concatenation of controls $v_{1},v_{2},..,v_{\hat n}$ namely $v=\sum_{n=1}^{n=\hat n} v_n$.

	Let us define $\Theta_{\hat n}:=(\theta_{1},\theta_{2},...,\theta_{\hat n})^T$ where $\theta_{n}\in\mathbb R_{-}\backslash\{0\}$. For any admissible control $w=\sum_{n}^{\hat n} w_{n}$, we also define $\varphi_{\hat n}(\Theta_{\hat n}):=   J(v+\sum_{n=1}^{\hat n}\theta_nw_n)$, which is quadratic. We have:
\begin{equation}\label{variation}
\varphi_{\hat n}(\Theta_{\hat n})=\varphi_{\hat n}({\bf 0})+\Theta_{\hat n}^T\nabla\varphi_{\hat n}({\bf 0})+\frac 1 2\Theta_{\hat n}^T\nabla^2\varphi_{\hat n}({\bf 0})\Theta_{\hat n},
\end{equation}
where ${\bf 0}=(0,...,0)^T$. Therefore we can write $\nabla\varphi_{\hat n}(\Theta_{\hat n})\in\mathbb R^{\hat n}$ as $\nabla\varphi_{\hat n}(\Theta_{\hat n})=D(v,w)+H(v,w)\Theta_{\hat n}$, where the Jacobian vector and the Hessian matrix are given respectively by:
\begin{eqnarray*}
D(v,w)&:=&(\langle\nabla  J(v),\bpi_{1}(w)\rangle_{c},\dots,\langle\nabla  J(v),\bpi_{\hat n}(w)\rangle_{c})^T\in\mathbb R^{\hat n},\\
H(v,w)&:=&(H_{n,m})_{n,m}, \text{ for }\   H_{n,m}=\langle \bpi_{n}(w),\bpi_{m}(w)\rangle_{\nabla^2 J}.
\end{eqnarray*}
 Here, $(\bpi_n)$ is the restriction over the time interval $I_n$, indeed $\bpi_n(w)$ has support on $I_n$ and assumes extension by zero in $I$. The solution $\Theta_{\hat n}^\star$ of $\nabla\varphi_{\hat n}(\Theta_{\hat n})={\bf 0}$ can be written in the form:
\begin{equation}\label{thetastar}\Theta_{\hat n}^\star=-H^{-1}(v,w)D(v,w).\end{equation}
	 In the parallel distributed control problem, we are concerned with the following optimality system:
	\begin{eqnarray}\label{optim-System-Parallel}
	&&\left\{\begin{array}{ll}\label{fwdpDIS}
\partial_{t} \mathcal Y_{n} - \sigma\Delta \mathcal Y_{n} = \mathcal B v_{n},\quad\text{on }I\times\Omega,\\
\mathcal Y_{n}(t=0)= \delta_{n}^{0}y_{0}.
\end{array}\right.\\
&&\mathcal Y(T) = \sum_{n=1}^{\hat n}\mathcal Y_{n}(T) \\
&&\left\{\begin{array}{ll}\label{bwdDIS}
\partial_{t} \mathcal P + \sigma\Delta \mathcal P=0,\quad\text{ on }I\times\Omega,\\
\mathcal P(t=T)=\mathcal Y(T)-y^{target}.\end{array}\right.\\
&&\nabla J( \sum_{n=1}^{\hat n}v_{n})= \mathcal{B}^T\mathcal P+\alpha\sum_{n=1}^{\hat n} v_{n}=0,\text{ on }I\times\Omega.
	\end{eqnarray}
	where $\delta_{n}^{0}$ stands for the function taking value "$1$" only if $n=0$, else it takes the value "$0$". 
The Dirichlet control problem we are concerned with:
 \begin{eqnarray}
&&\left\{\begin{array}{ccll}\label{fwdpDIR}
\partial_{t} \mathcal Y_{n,\Gamma} - \sigma\Delta \mathcal Y_{n,\Gamma} &=&f&\quad\text{on } I\times\Omega\\
\mathcal Y_{n,\Gamma}&=&v_{n,\Gamma} &\quad\text{on } I\times\Gamma\\
\mathcal Y_{n,\Gamma}&=&g &\quad\text{on } I\times\{\partial\Omega\backslash\Gamma\}\\
\mathcal Y_{n,\Gamma}(0)&=&\delta_{n}^{0}y_{0}.&
\end{array}\right.\\
&&\mathcal Y_\Gamma(T) = \sum_{n=1}^{\hat n}\mathcal Y_{n,\Gamma}(T) \\
&&\left\{\begin{array}{ccll}\label{bwdDIR}
\partial_{t} \mathcal P_\Gamma + \sigma\Delta \mathcal P_\Gamma  &=& 0& \text{ on } I\times\Omega\\
\mathcal P_\Gamma &=& 0 & \text{ on } I\times\partial\Omega\\
\mathcal P_\Gamma (T) &=& &\mathcal Y_\Gamma(T)-y^{target}.
\end{array}\right.\\
&&\nabla J_\Gamma( \sum_{n=1}^{\hat n}v_{n,\Gamma}) = -\big(\nabla \mathcal P_\Gamma\big)^T\vec{n} + \alpha  \sum_{n=1}^{\hat n} v_{n,\Gamma}  =0 \quad\text{ on }\ I\times\Gamma.
\end{eqnarray}

	The resolution of Eqs.~\eqref{fwdpDIS} and \eqref{fwdpDIR} with respect to $n$ are fully performed in parallel over the time interval $I$. It is recalled that the superscript $k$ denotes the iteration index. The update formulae for the control variable $v^k$ is given by: 
	$$
	v^k_n = v^{k-1}_n + \theta^{k-1}_n 
	\mathcal{B}^T\mathcal P^{k-1}+\alpha\sum_{n=1}^{\hat n} v_{n}^{k-1} .$$
	respectively as 
	$$
	v^k_{n,\Gamma} = v^{k-1}_{n,\Gamma} + \theta^{k-1}_{n,\Gamma}  -\big(\nabla \mathcal P^{k-1}_\Gamma\big)^T\vec{n} + \alpha_{} \sum_{n=1}^{\hat n}v^{k-1}_{n,\Gamma} .
	$$
	
	\begin{quotation}
		We shall drop in the following the index $_{\Gamma}$ of the cost functional $J$. This index would be only used to specify which cost function is in consideration. unless the driven formulation apply for distributed as well as boundary control.
		\end{quotation}

	We show hereafter how to assemble vector step-length $\Theta_{\hat n}^k$ at each iteration. For the purposes of notation we denote by $H_k$ the $k$-th iteration of the Hessian matrix $H(\nabla  J(v^k),\nabla  J(v^k))$ and by $D_k$ the $k$-th iteration of the Jacobian vector $D(\nabla  J(v^k),\nabla  J(v^k))$. The line-search is performed with quasi-Newton techniques that uses at each iteration $k$ a Hessian matrix $H_{k}$ and Jacobian vector $D_{k}$ defined respectively by:   
\begin{eqnarray}\label{eqDk}
D_{k}&:=&\Big(\langle \nabla  J(v^k),\bpi_{1}\big(\nabla  J(v^k)\big)\rangle_{c},..,\langle\nabla  J(v^k),\bpi_{\hat n}\big(\nabla  J(v^k)\big)\rangle_{c}\Big)^T,\\
 (H_{k})_{n,m}&:=&\langle \bpi_{n}\big(\nabla  J(v^{k})\big),\bpi_{m}\big(\nabla  J(v^{k})\big)\rangle_{\nabla^2 J}.\label{eqHk}
 \end{eqnarray}
  The spectral condition number of the Hessian matrix $\nabla^2J$ is denoted as: $\underline\kappa=\underline\kappa(\nabla^2J):=\lambda_{max} \lambda_{min}^{-1}$,
 with $\lambda_{max}:=\lambda_{max}(\nabla^2J)$ the largest eigenvalue of $\nabla^2J$ and $\lambda_{min}:=\lambda_{min}(\nabla^2J)$ its smallest eigenvalue. 
 
 	According to ~Eq.\eqref{thetastar} we have 
\begin{equation}\label{tta}
\Theta_{\hat n}^k = -H_k^{-1}D_k.
\end{equation}
From Eq.\eqref{variation} we have:
\begin{equation}\label{Jvar}
 J(v^{k+1})= J(v^k)+(\Theta_{\hat n}^k)^TD_k + \frac 1 2 (\Theta_{\hat n}^k)^T\, H_{k}\, \Theta_{\hat n}^k.
\end{equation}
Our parallel algorithm to minimize the cost functional Eq.\eqref{funcJplusieursvariable1} and \eqref{funcJplusieursvariable2}, is stated as follows (see Algorithm~\ref{ESDDOC}).
\vspace{.2in}
\begin{algorithm}[htbp]
\SetAlgoLined\LinesNumbered\ShowLn
\KwIn{$v^0$}
\While{Convergence}
{
\ForAll{$1\geq n\geq \hat n$}
{
 \label{step1}  Solve $\mathcal Y_{n}(T)(v_{n}^k)$ of~Eq.\eqref{fwdpDIS}(respectively Eq.\eqref{fwdpDIR}) {\bf in parallel} for all $1\leq n \leq\hat n$\;
}

Compute $\mathcal P(t)$ with the backward problem according to Eq.\eqref{bwdDIS} (respectively Eq.\eqref{bwdDIR}) \label{step5}\;
\ForAll{$1\geq n\geq \hat n$}
{
\label{step2}Compute  $(D_{k})_{n}$ of Eq.\eqref{eqDk} {\bf in parallel} for all $1\leq n \leq\hat n$\;
}

Gather $(D_{k})_{n}$ from processor $n$,  $2\leq n\leq \hat n$ to {\bf master processor}\label{step3}\;
Assemble the Hessian matrix $H_{k}$ according to Eq.\eqref{eqHk} with {\bf master processor}\label{step4}\;

Compute the inversion of $H_{k}$ and calculate $\Theta_{\hat n}^{k}$ using Eq.\eqref{tta}\label{step6}\;
Broadcast $\theta_{n}^k$ from {\bf master processor} to all slaves processors\label{step7}\;
Update time-window-control variable $v_{n}^{k+1}$ {\bf in parallel} as  \label{step8}:
$$
v_n^{k+1} = v_n^k + \theta_n^k \bpi_n\big(\nabla  J(v^k)\big)\quad\text{ for all } 1\leq n \leq\hat n,
$$
 and go to~step~\ref{step1}\;
 $k=k+1$\;

}\caption{\bf Enhanced steepest descent algorithm for the optimal control problem.}\label{ESDDOC}
\end{algorithm}

	Since $(v_n)_n$ has disjoint time-support, thanks to the linearity, the notation $e_n\big(\nabla  J(v^k)\big)$ is nothing but $\nabla  J(v^k_n)$, where $v^k$ is the concatenation of $v_1^k,\dots,v^k_{\hat n}$. 
In Algorithm~\ref{ESDDOC} steps \ref{step3}, \ref{step4}, \ref{step6}, \ref{step7} and \ref{step8} are trivial tasks in regards to computational effort.

\section{Convergence analysis of Algorithm~\ref{ESDDOC}}
\label{riahi_mini_15_sec:4}
	This section provides the proof of convergence of Algorithm~\ref{ESDDOC}. In the sequel, we suppose that $\|\nabla  J(v^k)\|_{c}$ does not vanish; otherwise the algorithm has already converged. 

\begin{prop}
The increase in value of the cost functional $J$ between two successive controls $v^k$ and $v^{k+1}$ is bounded below by:
\begin{equation}\label{prop2}
 J(v^k)- J(v^{k+1})
\geq\frac {1}{2\kappa(H_k)}\frac{\|\nabla  J(v^k)\|^4_{c}}{\|\nabla  J(v^k)\|^2_{\nabla^2 J}}.
\end{equation}
\end{prop}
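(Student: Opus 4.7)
The plan is to first reduce the decrement $J(v^k)-J(v^{k+1})$ to a Rayleigh-type expression in $H_k$ and $D_k$. Substituting the closed form $\Theta_{\hat n}^k=-H_k^{-1}D_k$ from Eq.\eqref{tta} into the quadratic expansion Eq.\eqref{Jvar}, the linear term contributes $-D_k^T H_k^{-1} D_k$ and the quadratic term $+\tfrac{1}{2}D_k^T H_k^{-1} D_k$, so the two combine into the closed form
$$J(v^k)-J(v^{k+1}) \;=\; \tfrac{1}{2}\,D_k^{T}H_k^{-1}D_k.$$
Because $H_k$ is symmetric positive definite, the Rayleigh--Ritz inequality gives $D_k^{T}H_k^{-1}D_k\geq \|D_k\|_2^{2}/\lambda_{\max}(H_k)$, and the task reduces to lower-bounding $\|D_k\|_2^{2}$ and upper-bounding $\lambda_{\max}(H_k)$ by the quantities appearing in \eqref{prop2}.

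For $\|D_k\|_2^{2}$, I would exploit that each $\bpi_n$ is an orthogonal projector (the time-restriction to $I_n$) and that the family $(\bpi_n)_{n=1}^{\hat n}$ forms a partition of unity. Hence $(D_k)_n=\langle\nabla J(v^k),\bpi_n(\nabla J(v^k))\rangle_c=\|\bpi_n(\nabla J(v^k))\|_c^{2}$, and summing over $n$ yields $\sum_{n=1}^{\hat n}(D_k)_n=\|\nabla J(v^k)\|_c^{2}$. A one-line Cauchy--Schwarz in $\mathbb{R}^{\hat n}$ then gives
$$\|D_k\|_2^{2}\;\geq\;\frac{1}{\hat n}\Bigl(\sum_{n=1}^{\hat n}(D_k)_n\Bigr)^{\!2}\;=\;\frac{\|\nabla J(v^k)\|_c^{4}}{\hat n}.$$

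For $\lambda_{\max}(H_k)$ I would use the same partition identity, but now on the $\nabla^2J$-inner product: by bilinearity and $\sum_n\bpi_n=\mathrm{Id}$, the double sum collapses,
$$\sum_{n,m=1}^{\hat n}(H_k)_{n,m}\;=\;\Bigl\langle\sum_{n}\bpi_n(\nabla J(v^k)),\,\sum_{m}\bpi_m(\nabla J(v^k))\Bigr\rangle_{\nabla^2 J}\;=\;\|\nabla J(v^k)\|_{\nabla^2 J}^{2}.$$
Applying Rayleigh to the all-ones vector $(1,\dots,1)^{T}\in\mathbb{R}^{\hat n}$ produces $\hat n\,\lambda_{\min}(H_k)\leq\|\nabla J(v^k)\|_{\nabla^2 J}^{2}$, so $\lambda_{\max}(H_k)=\kappa(H_k)\lambda_{\min}(H_k)\leq \kappa(H_k)\|\nabla J(v^k)\|_{\nabla^2 J}^{2}/\hat n$. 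Assembling the three inequalities, the factor $\hat n$ cancels and \eqref{prop2} drops out.

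The main obstacle I expect is the careful bookkeeping of the two scalar products and verifying the two collapse identities: that $(D_k)_n$ really reduces to $\|\bpi_n(\nabla J(v^k))\|_c^{2}$ in the time-integrated $L^2(I;L^2(\Omega_c))$ norm (which needs $\bpi_n$ to be an orthogonal projection with disjoint time support), and that $\sum_{n,m}(H_k)_{n,m}$ collapses by bilinearity of $\langle\cdot,\cdot\rangle_{\nabla^2 J}$ to $\|\nabla J(v^k)\|_{\nabla^2 J}^{2}$. Once these two partition-of-unity identities are in place, the remainder is a single line combining Rayleigh--Ritz and Cauchy--Schwarz, with the $\hat n$ from both bounds canceling exactly.
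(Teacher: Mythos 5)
Your proof is correct and follows the same skeleton as the paper's: the closed-form decrement $J(v^k)-J(v^{k+1})=\tfrac12 D_k^TH_k^{-1}D_k$, a Rayleigh bound $D_k^TH_k^{-1}D_k\geq\|D_k\|_2^2/\lambda_{\max}(H_k)$, the collapse identity $\mathrm{1}^T_{\hat n}H_k\mathrm{1}_{\hat n}=\|\nabla J(v^k)\|^2_{\nabla^2J}$ combined with the Rayleigh quotient at the all-ones vector to bound $\lambda_{\max}(H_k)\leq\kappa(H_k)\|\nabla J(v^k)\|^2_{\nabla^2J}/\hat n$, and then assembly. The one place you genuinely diverge is the treatment of $\|D_k\|_2^2$, and your version is the sound one: the paper asserts $\|D_k\|_2^2=\sum_n\|\bpi_n(\nabla J(v^k))\|_c^4=\|\nabla J(v^k)\|_c^4$, which amounts to claiming $\sum_n a_n^2=(\sum_n a_n)^2$ for $a_n=\|\bpi_n(\nabla J(v^k))\|_c^2\geq0$ and is false whenever more than one block is active; you instead apply Cauchy--Schwarz in $\mathbb{R}^{\hat n}$ to get $\|D_k\|_2^2\geq\|\nabla J(v^k)\|_c^4/\hat n$. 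The factor $1/\hat n$ you lose there cancels exactly against the factor $\hat n$ the paper would otherwise have to discard via ``$\hat n\geq1$,'' so you land precisely on the stated constant $\tfrac{1}{2\kappa(H_k)}$ rather than on a spuriously stronger intermediate bound. In short, your argument buys a proof of the proposition that does not rest on the erroneous equality, at the cost of giving up the (unjustified) extra factor of $\hat n$ in \eqref{MajDHD}; note that downstream, Theorem~\ref{thmrate} only uses the $\hat n$-free form, so nothing is lost.
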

\begin{proof} 
	Using  Eq.\eqref{tta} and Eq.\eqref{Jvar}, we can write:
\begin{equation}\label{equivarphi}
 J(v^k)- J(v^{k+1})=\frac 1 2 D_{k}^TH_{k}^{-1}D_{k}.
\end{equation}
Preleminaries:
From the definition of the Jacobian vector $D_k$ we have
\begin{eqnarray*}
\|D_{k}\|^2_2&&=\sum_{n=1}^{\hat n}\langle\nabla  J(v^k),\bpi_{n}(\nabla  J(v^k))\rangle_{c}^2,\\
&&=\sum_{n=1}^{\hat n}\langle \bpi_{n}(\nabla  J(v^k)),\bpi_{n}(\nabla  J(v^k))\rangle_{c}^2,\\
&&=\sum_{n=1}^{\hat n}\|e_{n}(\nabla  J(v^k))\|^4_{c},\\
&&=\|\nabla  J(v^k)\|_{c}^4.\end{eqnarray*}
Furthermore since  $H_k$  is an SPD matrix we have
$\lambda_{\text{min}}(H_k^{-1})=\frac 1{\lambda_\text{max}(H_k)},$
from which we deduce:
$\frac 1 {\lambda_\text{min}(H_k^{})} \geq \frac 1{ \frac{1}{\hat n}\mathrm{1}^T_{\hat n}H_{k}\mathrm{1}_{\hat n}}.$
Moreover, we have:
\begin{eqnarray*}
	D_{k}^T H_{k}^{-1}D_{k}&&=\frac{D_{k}^T H_{k}^{-1}D_{k}}{\|D_{k}\|^2_2}\|D_{k}\|^2_2
	\geq \lambda_{min}(H_{k}^{-1})\|{D_{k}}\|^2_{2}\\
	&&=\lambda_{min}(H_k^{-1})\lambda_{min}(H_k)\frac{\|\nabla  J(v^k)\|_{c}^4}{\lambda_{min}(H_k)}\\
&&\geq\frac{\lambda_{min}(H_k)}{\lambda_{max}(H_k)}
	\frac{
	\|\nabla  J(v^k)\|_{c}^4
	}{
            \frac{1}{\hat n}\mathrm{1}^T_{\hat n}H_{k}\mathrm{1}_{\hat n}
	}
	\\
	&&=\frac{\hat n}{\kappa(H_k)} \|\nabla  J(v^k)\|^{-2}_{\nabla^2 J}\|\nabla  J(v^k)\|_{c}^4.
\end{eqnarray*}
	Since the partition number $\hat n$ is greater than or equal to $1$, we conclude that :
\begin{equation}\label{MajDHD}
D_{k}^T H_{k}^{-1}D_{k} \geq \frac{ \|\nabla  J(v^k)\|^{-2}_{\nabla^2 J}\|\nabla  J(v^k)\|_{c}^4}{\kappa(H_k)}.
\end{equation}
Hence, using Eq.\eqref{equivarphi} we get the stated result.
\end{proof}

\begin{theorem}\label{thmrate}
For any partition $\hat n$ of sub intervals, the control sequence $(v^k)_{k\geq1}$ of Algorithm~\ref{ESDDOC} converges to the optimal control $v^{k}$ unique minimizer of the quadratic functional $J$. Furthermore we have:
$$\|v^{k}-v^\star\|^2_{\nabla^2 J} \leq r^k\|v^{0}-v^\star\|^2_{\nabla^2 J},$$
where the rate of convergence $r:=\Big( 1-\frac{4\underline\kappa}{\kappa(H_k)(\underline\kappa+1)^2}\Big)$ satisfies $0\leq r<1.$
\end{theorem}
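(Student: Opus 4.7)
The plan is to combine the one-step decrease estimate of the preceding proposition with the classical Kantorovich inequality---the standard device in the analysis of steepest-descent-type methods---and then iterate.

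First, I would exploit the quadraticity of $J$ to write
$$
J(v) - J(v^\star) = \tfrac{1}{2}\|v - v^\star\|^2_{\nabla^2 J}, \qquad \nabla J(v) = \nabla^2 J\,(v-v^\star).
$$
This immediately identifies the quantity $\|v^k - v^\star\|^2_{\nabla^2 J}$ appearing in the theorem with $2(J(v^k) - J(v^\star))$, and yields the useful representation
$$
J(v^k) - J(v^\star) = \tfrac{1}{2}\langle\nabla J(v^k),(\nabla^2 J)^{-1}\nabla J(v^k)\rangle_{c} = \tfrac{1}{2}\|\nabla J(v^k)\|^2_{(\nabla^2 J)^{-1}}.
$$
Thus it is enough to prove the claimed geometric decay for $J(v^k) - J(v^\star)$.

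Next, I would apply the Kantorovich inequality with $g = \nabla J(v^k)$ and $A = \nabla^2 J$ (whose condition number is $\underline\kappa$):
$$
\frac{\|g\|_c^4}{\|g\|^2_{\nabla^2 J}\,\|g\|^2_{(\nabla^2 J)^{-1}}} \;\geq\; \frac{4\underline\kappa}{(\underline\kappa+1)^2}.
$$
Using the identity from the previous step to replace $\|g\|^2_{(\nabla^2 J)^{-1}}$ by $2(J(v^k)-J(v^\star))$, this gives
$$
\frac{\|\nabla J(v^k)\|^4_c}{\|\nabla J(v^k)\|^2_{\nabla^2 J}} \;\geq\; \frac{8\underline\kappa}{(\underline\kappa+1)^2}\bigl(J(v^k)-J(v^\star)\bigr).
$$
Plugging this into the Proposition's bound $J(v^k)-J(v^{k+1}) \geq \frac{1}{2\kappa(H_k)}\,\|\nabla J(v^k)\|^4_c/\|\nabla J(v^k)\|^2_{\nabla^2 J}$ yields the one-step contraction
$$
J(v^{k+1}) - J(v^\star) \;\leq\; \Bigl(1-\tfrac{4\underline\kappa}{\kappa(H_k)(\underline\kappa+1)^2}\Bigr)\bigl(J(v^k)-J(v^\star)\bigr) \;=\; r\,\bigl(J(v^k)-J(v^\star)\bigr).
$$

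Iterating this bound $k$ times and converting back via the quadratic identity produces $\|v^k - v^\star\|^2_{\nabla^2 J} \leq r^k\|v^0 - v^\star\|^2_{\nabla^2 J}$. To verify $0\leq r<1$, I would observe that $(\underline\kappa-1)^2\geq 0$ gives $(\underline\kappa+1)^2\geq 4\underline\kappa$, hence $\frac{4\underline\kappa}{(\underline\kappa+1)^2}\leq 1$; combined with $\kappa(H_k)\geq 1$, the subtracted fraction lies in $(0,1]$, placing $r$ in $[0,1)$ (strictness using the standing hypothesis $\|\nabla J(v^k)\|_c \neq 0$).

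The main obstacle I anticipate is the $k$-dependence of $r$ through $\kappa(H_k)$: to make the induction give an honest geometric rate $r^k$ one should really replace $\kappa(H_k)$ by $\sup_k\kappa(H_k)$, and therefore must establish a uniform upper bound on $\kappa(H_k)$. Since $H_k$ is a Gram-matrix-like object built from $\nabla^2 J$ restricted to the span of the block gradients $\bpi_n(\nabla J(v^k))$, its spectrum should be controlled by that of $\nabla^2 J$; making this dependence quantitative is where I expect the bulk of the technical care to go. Everything else---Kantorovich, the quadratic identities, and the telescoping induction---is then routine given the preceding proposition.
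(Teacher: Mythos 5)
Your proof is correct and follows essentially the same route as the paper: the quadratic identity $J(v)-J(v^\star)=\tfrac12\|v-v^\star\|^2_{\nabla^2 J}$ together with $\nabla J(v^k)=\nabla^2J\,(v^k-v^\star)$, the one-step decrease from the preceding proposition, the Kantorovich inequality applied to $\nabla J(v^k)$ with matrix $\nabla^2 J$, and iteration of the resulting contraction. Your closing remark about the $k$-dependence of $r$ through $\kappa(H_k)$ is well taken --- the paper states the bound as $r^k$ with a $k$-dependent $r$ and never establishes a uniform bound $\sup_k \kappa(H_k)<\infty$ (indeed it concedes afterwards that $\kappa(H_k)$ is difficult to pre-estimate), so on that point your version is the more careful one.
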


\begin{proof}
	 We denote by $v^\star$ the optimal control that minimizes $J$. The equality
$$
 J(v) =  J(v^\star) +\frac 1 2 \langle v-v^\star,v-v^\star\rangle_{\nabla^2 J}= J(v^\star) +\frac 1 2 \| v-v^\star\|_{\nabla^2 J}^2,
$$
holds for any control $v$; in particular we have:
\begin{eqnarray*}
 J(v^{k+1}) &=& J(v^\star) +\frac 1 2 \| v^{k+1}-v^\star\|_{\nabla^2 J}^2,\\
 J(v^{k}) &=& J(v^\star) +\frac 1 2 \| v^{k}-v^\star\|_{\nabla^2 J}^2.
\end{eqnarray*}
Consequently, by subtracting the equations above, we obtain
\begin{equation}\label{diffJ}
 J(v^{k+1})- J(v^{k})=\frac 1 2 \| v^{k+1}-v^\star\|_{\nabla^2 J}^2-\frac 1 2 \| v^{k}-v^\star\|_{\nabla^2 J}^2.\end{equation}
Since $J$ is quadratic, we have $\nabla^2 J(v^k-v^\star)=\nabla  J(v^k)$, that is $v^k-v^\star=(\nabla^2 J)^{-1}\nabla  J(v^k)$. Therefore we deduce:
\begin{eqnarray}\label{vv}
\|v^k-v^{\star}\|^2_{\nabla^2J} &&=\langle v^k-v^\star,v^k-v^\star\rangle_{\nabla^2 J} \\\nonumber
&&= \langle v^k-v^\star,\nabla^2J,v^k-v^\star\rangle_{c} \\\nonumber
&&= \langle (\nabla^2 J)^{-1}\nabla  J(v^k),\nabla^2 J,(\nabla^2 J)^{-1}\nabla  J(v^k)\rangle_{c} \\\nonumber
&&=\langle \nabla  J(v^k),(\nabla^2 J)^{-1},\nabla  J(v^k) \rangle_{c}\\\nonumber
&&=\|\nabla  J(v^k)\|^2_{(\nabla^2 J)^{-1}}.
\end{eqnarray}

Because of Eq.\eqref{equivarphi}, we also have
\begin{equation*} J(v^{k+1})- J(v^{k})=-\frac 1 2 D_{k}^TH_{k}^{-1}D_{k}.\end{equation*}

	Using Eq.\eqref{diffJ} and the above, we find that:
$$\|v^{k+1}-v^\star\|^2_{\nabla^2 J} = \|v^{k}-v^\star\|^2_{\nabla^2 J}-D_{k}^TH^{-T}_{k}D_{k}.$$
Moreover, according to Eqs~\eqref{MajDHD}-\eqref{vv}, we obtain the following upper bound:
\begin{eqnarray}
\|v^{k+1}-v^\star\|^2_{\nabla^2 J} 
&&\leq  \|v^{k}-v^\star\|^2_{\nabla^2 J} -  \frac{1}{\kappa(H_k)}\frac{\|\nabla  J(v^k)\|^4_{c}}{\|\nabla  J(v^k)\|^2_{\nabla^2 J}}\nonumber\\
&&\leq \|v^{k}-v^\star\|^2_{\nabla^2 J} \Big( 1 - \frac{1}{\kappa(H_k)}\frac{ \|\nabla  J(v^k)\|^4_{c} }{ \|\nabla  J(v^k)\|^2_{\nabla^2 J} \|\nabla  J(v^k)\|^2_{(\nabla^2 J)^{-1}} }\Big).\label{posmaj}
\end{eqnarray}
	Using the Kantorovich inequality~\cite{MR0053389,baksalary1991generalized} (see also The Appendix) :
\begin{equation}\label{kant-matrix}
\frac{ \|\nabla  J(v^k)\|^4_{c} }{ \|\nabla  J(v^k)\|^2_{\nabla^2 J} \|\nabla  J(v^k)\|^2_{(\nabla^2 J)^{-1}}}\geq \frac{4\lambda_{max}\lambda_{min}}{({\lambda_{max}}+{\lambda_{min}})^2}.
\end{equation}
Then
$$
	1 - \frac{1}{\kappa(H_k)}\frac{ \|\nabla  J(v^k)\|^4_{c} }{ \|\nabla  J(v^k)\|^2_{\nabla^2 J} \|\nabla  J(v^k)\|^2_{(\nabla^2 J)^{-1}} }
\leq 
 1-\frac{4\underline\kappa}{\kappa(H_k)(\underline\kappa+1)^2}.
$$
Finally we obtain the desired results for any partition to $\hat n$ subdivision, namely
$$
\|v^{k}-v^\star\|^2_{\nabla^2 J} \leq \Big( 1-\frac{4\underline\kappa}{\kappa(H_k)(\underline\kappa+1)^2}\Big)^{k}\|v^{0}-v^\star\|^2_{\nabla^2 J}.
$$
The proof is therefore complete.
\end{proof}
\begin{remark}
	Remark that the proof stands correct for the boundary control, need just to change the subscript "$c$" indicating the distributed control region $\Omega_c$, replace it by $"\Gamma"$ to indicate the boundary control on $\Gamma\subset\partial\Omega$ .
\end{remark}
\begin{remark}
	Remark that for $\hat n=1$, we immediately get the condition number $\kappa(H_k)=1$ and we recognize the serial steepest gradient method, which has convergence rate $\big(\frac{\underline\kappa-1}{\underline\kappa+1}\big)^2$.
\end{remark}	
	
	 It is difficult to pre-estimate the spectral condition number $\kappa(H_k)(\hat n)$ (is a function of $\hat n$) that play an important role and contribute to the evaluation of the rate of convergence as our theoretical rate of convergence stated. We present in what follows numerical results that demonstrate the efficiency of our algorithm,  Tests consider examples of well-posed and ill-posed control problem.  
\section{Numerical experiments}

	We shall present the numerical validation of our method in tow stages. In the first stage, we consider a linear algebra framework where we construct a random matrix-based quadratic cost function that we minimize using Algorithm~\ref{LASD}. In the second stage, we consider the two optimal control problems presented in sections~\ref{DistBCP} and in \ref{DirichBCP} for the distributed- and Dirchlet boundary- control respectively. In both cases we minimize a quadratic cost function properly defined for each handled control problem.   

\subsection{Linear algebra program}
This subsection treat basically the implementation of Algorithm~\ref{LASD}. The program was implemented using the scientific programming language Scilab~\cite{scilab}. We consider the minimization of a quadratic form $q$ where the matrix $A$ is an SPD $m$-by-$m$ matrix and a real vector $b\in\mathbb{R}^{m}\cap rank(A)$ are generated by hand (see below for their constructions). We aim at solving iteratively the linear system $Ax =b$, by minimizing 
	\begin{equation}\label{eqQ}
	q(x) = \frac{1}{2} x^T A x - x^Tb.
	\end{equation}
	
	Let us denote by $\hat n$ the partition number of the unknown $x\in\mathbb{R}^{m}$. The partition is supposed to be uniform and we assume that $\hat n$ divides $m$ with a null rest.
	
	 We give in Table~\ref{lstingScilab} a {\sc Scilab} function that builds the vector step-length $\Theta_{\hat n}^k$ as stated in Eq.~\eqref{theta_algebra}.
	 In the practice we randomly generate an SPD sparse matrix $A= (\alpha+\gamma m) I_{\mathbb{R}^{m}} +R$, where $0<\alpha<1$, $\gamma>1$, $I_{\mathbb{R}^{m}}$ is the $m$-by-$m$ identity matrix and $R$ is a symmetric $m$-by-$m$ random matrix. This way the matrix $A$ is symmetric and diagonally dominant, hence SPD. It is worthy noticing that the role of $\alpha$ is regularizing when rapidly vanishing eigenvalues of $A$ are generated randomly. This technique helps us to manipulate the coercivity of the handled problem hence its spectral condition number.

	 For such matrix $A$ we proceed to minimize the quadratic form defined in Eq.\eqref{eqQ} with several $\hat n$-subdivisions.

\begin{table}
\begin{lstlisting}[language=Scilab]
function [P]=Build_Hk(n,A,b,xk,dJk)
m=size(A,1);l=m/n;ii=modulo(m,n);
if ii~=0 then
    printf("Please chose an other n!");
    abort;
end
dJkn=zeros(m,n); Dk=[];
for i=1:n 
	dJkn((i-1)*l+1:i*l,i)=	dJk( (i-1)*l+1:i*l );
	Dk(i)=dJkn(:,i)'*(A*xk-b);
end
Hk=[,];
 for i=1:n
	for j=i:n
	  Hktmp=A*dJkn(:,j);
	  Hk(i,j)=dJkn(:,i)'*Hktmp;
	  Hk(j,i)=Hk(i,j);
	end
end
theta=-Hk\Dk;
P = eye(m,m);
	for i=1:n
		P( (i-1)*l+1:i*l , (i-1)*l+1:i*l )=theta(i).*eye(l,l);
	end
endfunction
\end{lstlisting}
\caption{Scilab function to build the vector step length, for the linear algebra program.}\label{lstingScilab}
\end{table}

	The improvement quality of the algorithm against the serial case $\hat n=1$ in term of iteration number is presented in Figure.~\ref{figalgcost}. In fact, the left hand side of Figure.~\ref{figalgcost} presents the cost function minimization versus the iteration number of the algorithm where several choices of partition on $\hat n$ are carried out. In the right hand side of the Figure.~\ref{figalgcost} we give the logarithmic representation of the relative error $\frac {\|x^k-x^\star\|_2}{\|x^\star\|_2}$, where $x^\star$ is the exact solution of the linear system at hand.

\begin{figure}[htbp]\centering{
 \includegraphics[scale=.4]{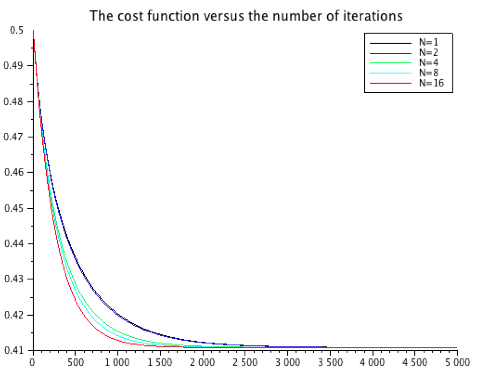}\hspace{.01in}
\includegraphics[scale=.4]{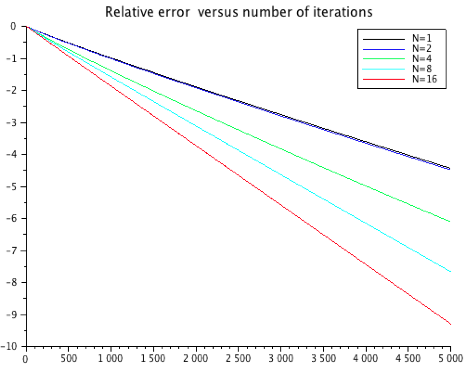}
}
\caption{Performance in term of iteration number: Several decomposition on $\hat n$. Results from the linear algebra Scilab program.}\label{figalgcost}
\end{figure}

\subsection{Heat optimal control program}
We discuss in this subsection the implementation results of Algorithm~\ref{ESDDOC} for the optimization problems presented in section~\ref{AtPOCP}. Our tests deal with the 2D-heat equation on the bounded domain $\Omega=[0,1]\times[0,1]$. We consider, three types of test problems in both cases of distributed and Dirichlet controls. Tests vary according to the theoretical difficulty of the control problem~\cite{MR1290658,MR2793831,survayControlWaveZuaZua}. Indeed, we vary the regularization parameter $\alpha$ and also change the initial and target solutions in order to handle more severe control problems as has been tested for instance in~\cite{MR1290658}. 

	Numerical tests concern the minimization of the quadratic cost functionals $J(v)$ and $J_{\Gamma}(v_{\Gamma})$ using Algorithm~\ref{ESDDOC}. It is well known that in the case $\alpha$ vanishes the control problem becomes an "approximated" controllability problem.  Therefore the control variable tries to produce a solution that reaches as close as it "can" the target solution. With this strategy, we accentuate the ill-conditioned degree of the handled problem. We also consider an improper-posed problems for the controllability approximation, where the target solution doesn't belong to the space of the reachable solutions. No solution exists thus for the optimization problem i.e. no control exists that enables reaching the given target !  
	
	For reading conveniences and in order to emphasize the role of the parameter $\alpha$ on numerical tests, we tag problems that we shall consider as $\mathcal P_i^\alpha$ where the index $i$ refers to the problem among $\{1,2,3,4\}$. The table below resumes all numerical test that we shall experiences


\vspace{.2in}
\begin{center}\footnotesize
\begin{tabular}{{|c||c|c|}}
\hline
--                                                           & Minimize $J(v)$ distributed control                                        & Minimize $J_{\Gamma}(v_\Gamma)$ boundary control  \\ \hline\hline
Moderate $\alpha=1\times 10^{-02}$   & well-posed problem                                                                &  ill-posed problem  \\
&  corresponding data in $(\mathcal P_1^{\alpha})$, $(\mathcal P_2^{\alpha}$)                                   & corresponding data in $(\mathcal P_3^{\alpha})$\\
 \hline\hline
Vanishing $\alpha=1\times 10^{-08}$   & ill-posed problem                                                                   & sever ill-posed problem \\
& corresponding data in $(\mathcal P_1^{\alpha})$ , $(\mathcal P_2^{\alpha}$)                                  & corresponding data in $(\mathcal P_3^{\alpha})$
\\ \hline\hline
Solution does not exist                         & sever ill-posed problem                                                         & sever ill-posed problem\\
& corresponding data in $(\mathcal P_4^{\alpha})$ &corresponding data in $(\mathcal P_4^{\alpha})$
 \\ \hline
\end{tabular}
\end{center}
\vspace{.2in}  
				 
	We suppose from now on that the computational domain $\Omega$ is a polygonal domain of the plane $\mathbb{R}^2$. We then introduce a triangulation $\mathcal{T}_{h}$ of $\Omega$; the subscript $h$ stands for the largest length of the edges of the tringles that constitute $\mathcal T_{h}$. The solution of the heat equation at a given time $t$ belongs to $H^1(\Omega)$. The source terms and other variables are elements of $L^2(\Omega)$. Those infinite dimensional spaces are therefore approximated with the finite-dimensional space $V_h$, characterized by $\mathbb P_1$ the space of the polynomials of degree $\leq 1$ in two variables $(x_1,x_2)$. We have $V_h:=\{u_h | \, u_h\in C^0(\overline{\Omega}), u_{h_{|K}}\in\mathbb P_1, \text{ for all } K\in\mathcal{T}_{h}\}$. In addition, Dirichlet boundary conditions (where the solution is in $H_{0}^1(\Omega)$ i.e. vanishing on boundary $\partial\Omega$) are taken into account via penalization of the vertices on the boundaries. The time dependence of the solution is approximated via the implicit Euler scheme. The inversion operations of matrices is performed by the {\sc umfpak} solver. We use the trapezoidal method in order to approximate integrals defined on the time interval.

	The numerical experiments were run using a parallel machine with 24 CPU's AMD with 800 MHz in a Linux environment. We code two FreeFem++~\cite{ffpp} scripts for the distributed and Dirichlet control. We use MPI library in order to achieve parallelism. 
	 
	 	 Tests that concern the distributed control problem are produced with control that acts on $\Omega_c\subset\Omega$, with $\Omega_c=[0,\frac 1 3]\times[0,\frac 1 3]$, whereas Dirichlet boundary control problem, the control acts on $\Gamma\subset\partial\Omega$, with $\Gamma=\{(x_1,x_2)\in\partial\Omega, | x_2=0\}$. The time horizon of the problem is fixed to $T=6.4$ and the small time step is $\tau=0.01$. In order to have a better control of the time evolution we put the diffusion coefficient $\sigma=0.01$.

\subsubsection{First test problem: Moderate Tikhonov regularization parameter $\alpha$}	
	We consider an optimal control problem on the heat equation. The control is considered first to be distributed and then Dirichlet. For the distributed optimal control problem we first use the functions 
\begin{equation}\tag{$\mathcal P_1^{\alpha}$}
\begin{split}
y_0(x_1,x_2)&= \exp\big(-\gamma 2\pi\big((x_1-.7)^2+(x_2-.7)^2\big)\big)\\
y^{target}(x_1,x_2)&= \exp\big(-\gamma 2\pi\big((x_1-.3)^2+(x_2-.3)^2\big)\big),
\end{split}
\end{equation}
as initial condition and target solution respectively. The real valued $\gamma$ is introduced to force the Gaussian to have support strictly included in the domain and verify the boundary conditions. The aim is to minimize the cost functional defined in Eq.~\eqref{funcJ}. 
\begin{figure}[htbp]
   \begin{minipage}[c]{.46\linewidth}
      \includegraphics[width=7cm,height=7cm]{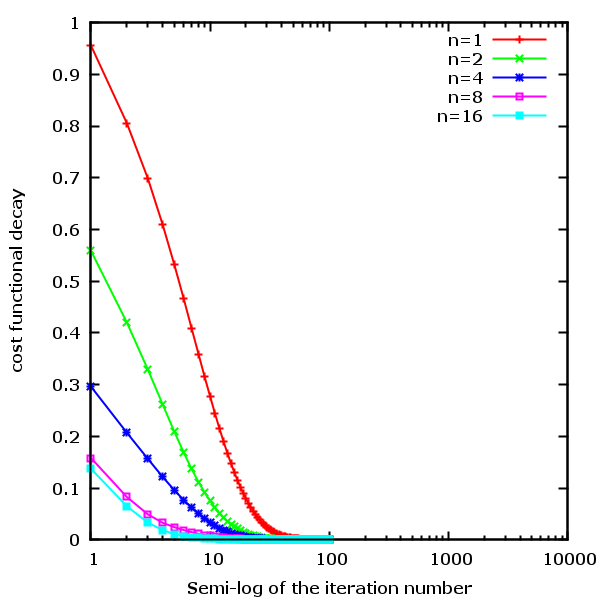}
   \end{minipage} \hfill
   \begin{minipage}[c]{.46\linewidth}
     \includegraphics[width=7cm,height=7cm]{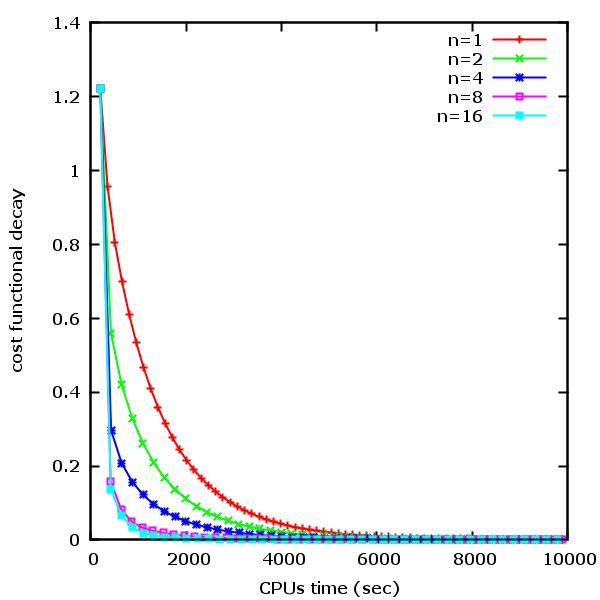}
   \end{minipage}
   \caption{ First test problem, for $\mathcal P_{1}^{\alpha}$: Normalized and shifted cost functional values versus iteration number (left) and versus computational time (right) for several values of $\hat n$ (i.e. the number of processors used).}\label{fig1}\label{Jtest1dist}
\end{figure}
The decay of the cost function with respect to the iterations of our algorithm is presented in Figure.~\ref{Jtest1dist} on the left side, and the same results are given with respect to the computational CPU's time (in sec) on the right side. We show that the algorithm accelerates with respect to the partition number $\hat n$ and also preserves the accuracy of the resolution. Indeed, all tests independently of $\hat n$ always converge to the unique solution. This is in agreement with Theorem~\eqref{thmrate}, which proves the convergence of the algorithm to the optimal control (unique if it exists~\cite{MR0271512}) for an arbitrary partition choice $\hat n$.

\begin{figure}[htbp]
\begin{tabular}{cc}
   \includegraphics[width=6cm,height=4.2cm]{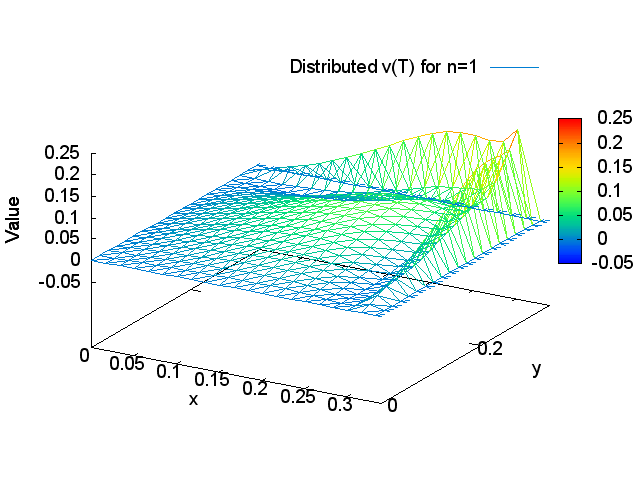} &   \includegraphics[width=6cm,height=4.2cm]{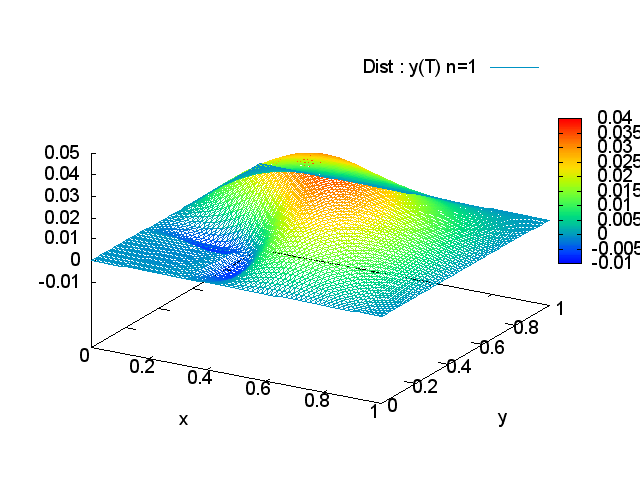} \\
   \includegraphics[width=6cm,height=4.2cm]{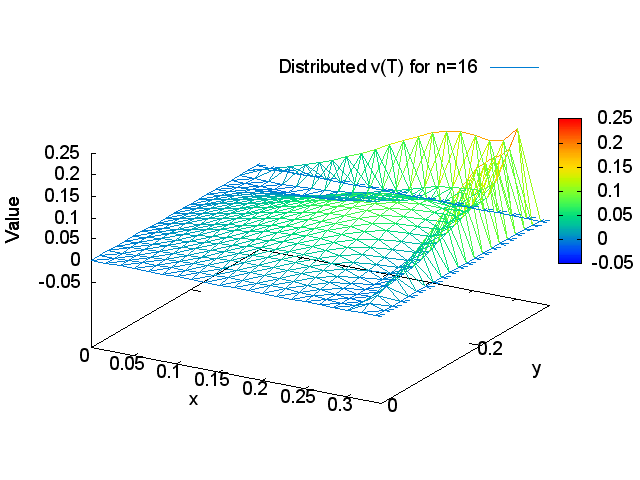} &   \includegraphics[width=6cm,height=4.2cm]{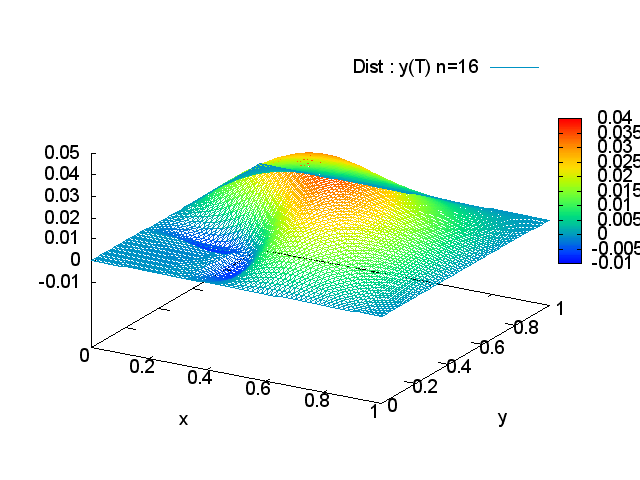} \\           
\end{tabular}
\caption{Snapshots in $\hat n=1,\,16$ of the distributed optimal control on the left columns and its corresponding controlled final state at time T: $y(T)$ on the right columns. The test case corresponds to the control problem $\mathcal P_1^\alpha$, where $\alpha$ is taken as $\alpha=1\times 10^{-02}$. Same result apply for different choice of $\hat n$.}
\label{snapshotTestDistGaussian}
\end{figure}

	We test a second problem with an a priori known solution of the heat equation. The considered problem has 
\begin{equation}\tag{$\mathcal P_2^{\alpha}$}
\begin{split}
y_0(x_1,x_2)&=\sin(\pi x_1)\sin(\pi x_2)\\
y^{target}(x_1,x_2)&=\exp(-2\pi^2\sigma  T)\sin(\pi x_1)\sin(\pi x_2),
\end{split}
\end{equation}
as initial condition and target solution respectively. Remark that the target solution is taken as a solution of the heat equation at time $T$. The results of this test are presented in Figure.~\ref{Jtest2dist}, which shows the decay in values of the cost functional versus the iterations of the algorithm on the left side and versus the computational CPU's time (in sec) on the right side. 

\begin{figure}[htbp]
   \begin{minipage}[c]{.46\linewidth}
      \includegraphics[width=7cm,height=7cm]{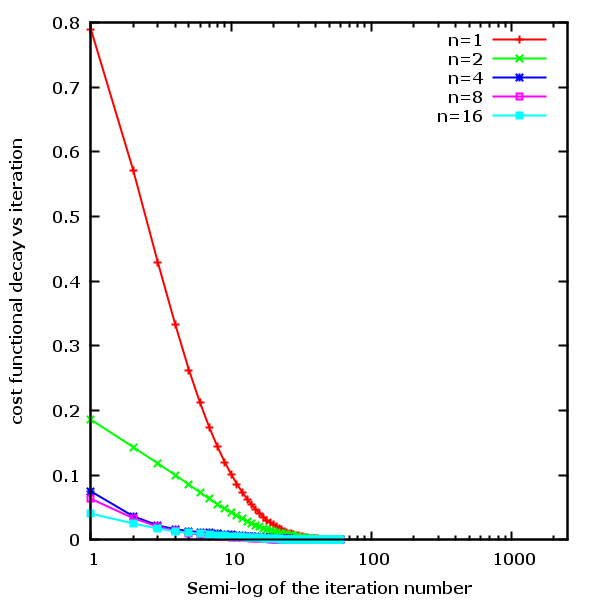}
   \end{minipage} 
   \begin{minipage}[c]{.46\linewidth}
      \includegraphics[width=7cm,height=7cm]{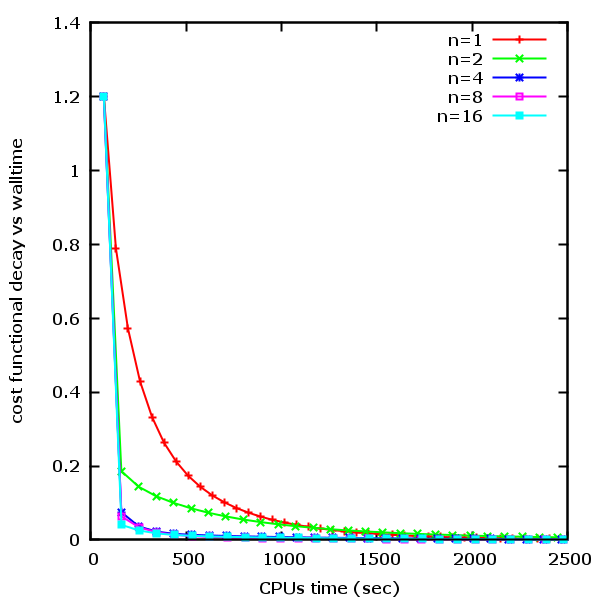}
   \end{minipage}
   \caption{ First test problem, for $\mathcal P_{2}^{\alpha}$: Normalized cost functional values versus computational CPU time  for several values of $\hat n$ (i.e. the number of processors used).} \label{Jtest2dist}
\end{figure}

\begin{figure}[htbp]
\begin{tabular}{cc}
   \includegraphics[width=6cm,height=4cm]{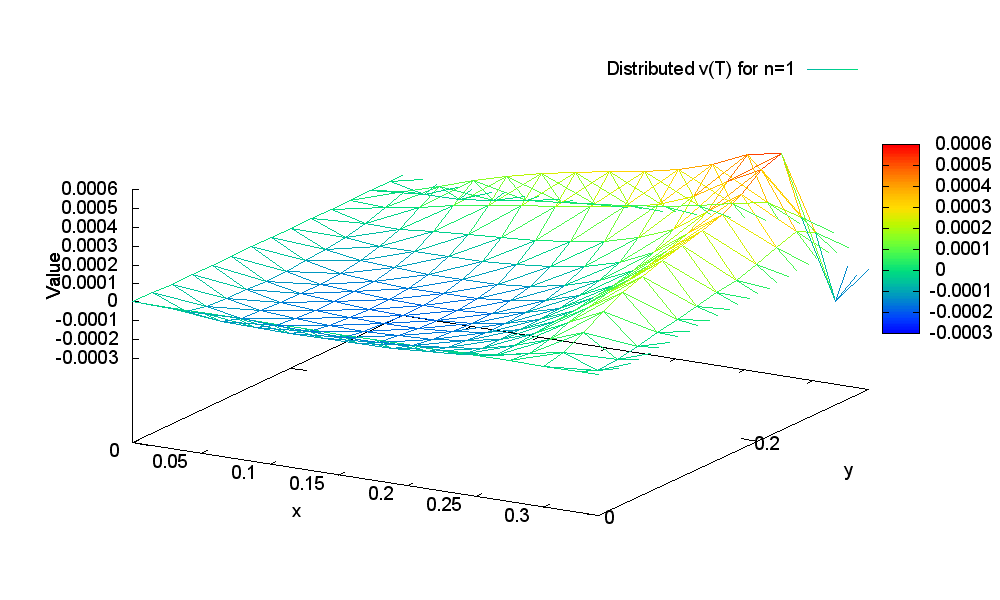} &   \includegraphics[width=6cm,height=4cm]{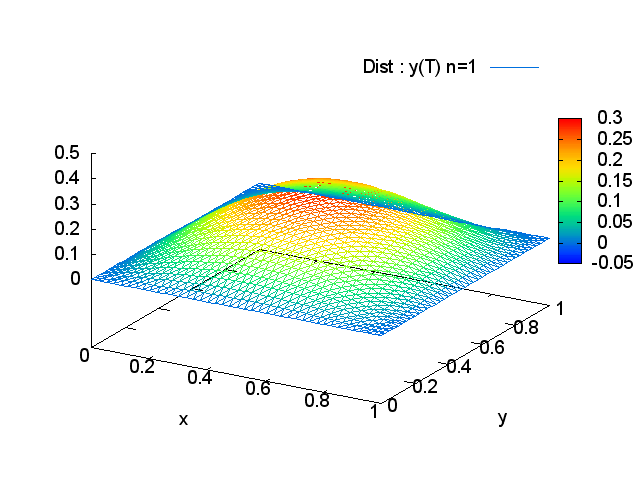} \\
   \includegraphics[width=6cm,height=4cm]{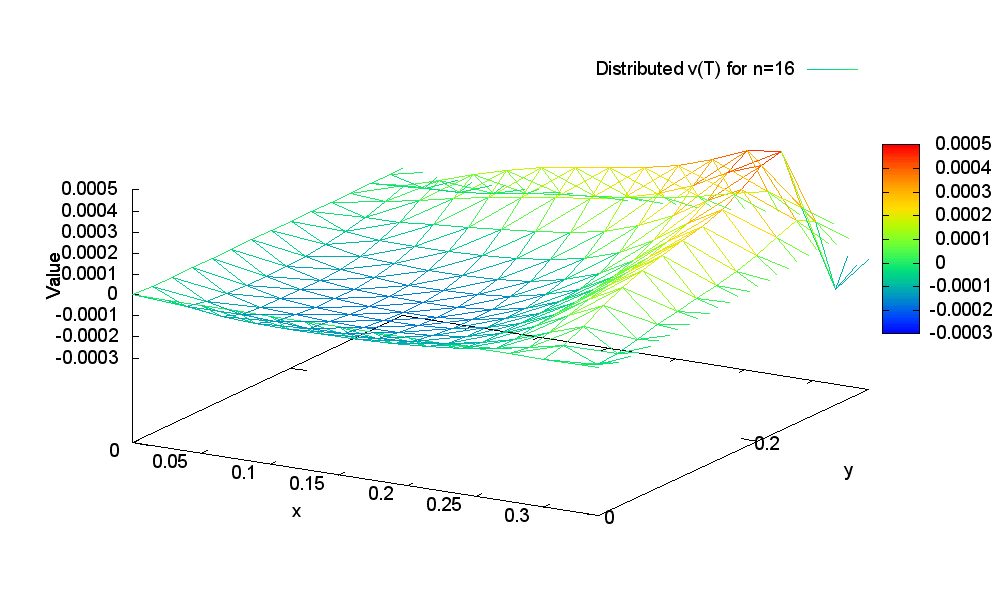} &   \includegraphics[width=6cm,height=4cm]{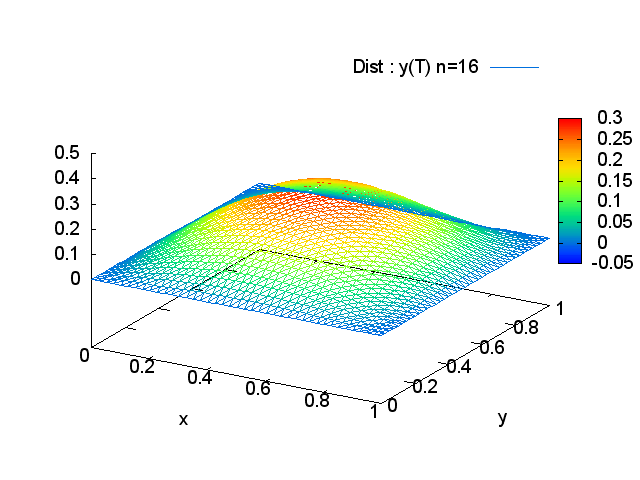} \\           
\end{tabular}
\caption{Snapshots in $\hat n=1,\,16$ of the distributed optimal control on the left columns and its corresponding controlled final state at time T: $y(T)$ on the right columns. The test case corresponds to the control problem $\mathcal P_2^\alpha$, where $\alpha=1\times 10^{-02}$. Same results apply for different choice of $\hat n$.}
\label{snapshotTestDist2}
\end{figure}
We give in Figure.~\ref{snapshotTestDistGaussian} and Figure.~\ref{snapshotTestDist2} several rows value snapshots (varying the $\hat n$) of the control and its corresponding controlled final solution $y (T)$.  Notice the stability and the accuracy of the method with any choice of $\hat n$. In particular the shape of the resulting optimal control is unique as well as the controlled solution $y(T)$ doesn't depend on $\hat n$.  
 
	For the Dirichlet boundary control problem we choose the following functions as source term, initial condition and target solution: 
	\begin{equation}\tag{$\mathcal P_3^{\alpha}$}
	\begin{split}
	f(x_1,x_2,t) &= 3\pi^3\sigma \text{exp}(2\pi^2\sigma t)(sin(\pi x_1) + sin(\pi x_2))\\
	 y_0(x_1,x_2) &= \pi(sin(\pi x_1) + sin(\pi x_2))\\
	y^{target}(x_1,x_2)  &= \pi\text{exp}(2\pi^2\sigma)(sin(\pi x_1) + sin(\pi x_2)),
	\end{split}
	\end{equation}
	respectively. Because of the ill-posed character of this problem, its optimization leads to results with hight contrast  in scale. We therefore preferred to summarize the optimizations results in Table~\ref{TabDirichletcontrol} instead of Figures.

\begin{remark}
	Because of the linearity and the superposition property of the heat equation, it can be shown that problems ($\mathcal P_2^\alpha$ and $\mathcal P_3^\alpha$) mentioned above are equivalent to a control problem which has null target solution.
\end{remark}


%

\begin{table}[htbp]
\tiny
\centering
\begin{tabular}{llccccccc||}
   \hline\hline
Test problem & \multicolumn{3}{c}{Results} \\
   \hline\hline
   $\mathcal P_1^\alpha$  &$\alpha=1\times 10^{-02}$ & & & & & \\
& Quantity & $\hat n=1$ & $\hat n=2$  & $\hat n=4$ & $\hat n=8$ & $\hat n=16$ \\   \hline
&Number of iterations $k$& 100  & 68 & 63 & 49 & 27 \\
&walltime in sec  & 15311.6 & 15352.3 & 14308.7& 10998.2& 6354.56 \\
&$\|\mathcal{Y}^k(T)-y^{target}\|_2\slash\|y^{target}\|_2$ & 0.472113 & 0.472117 & 0.472111&0.472104 & 0.472102  \\
&$ \int_{(0,T)}\|v^k\|_c^2 dt$ & 0.0151685 &0.0151509 &0.0151727 & 0.0152016 & 0.015214\\ 
   \hline 
   $\mathcal P_2^\alpha$  &$\alpha=1\times 10^{-02}$ & & & & & \\
& Quantity & $\hat n=1$ & $\hat n=2$  & $\hat n=4$ & $\hat n=8$ & $\hat n=16$ \\   \hline
&Number of iterations $k$& 60 & 50 & 45 & 40 & 35 \\
&walltime in sec & 3855.21 & 3726.28 & 4220.92 & 3778.13 & 3222.78 \\
&${\|\mathcal{Y}^k(T)-y^{target}\|_2\slash\|y^{target}\|_2}$ & $8.26\times 10^{-08}$ & $8.26\times 10^{-08}$&  $8.15\times 10^{-08}$& $8.15\times 10^{-08}$& $8.14 \times 10^{-08}$ \\
&$\int_{(0,T)}\|v^k\|_c^2 dt$ & $1.68\times 10^{-07}$ & $1.68\times 10^{-07}$ &$1.72\times 10^{-07}$  & $1.72\times 10^{-07}$ &$1.72\times 10^{-07}$\\\hline
   \hline 
   $\mathcal P_2^\alpha$ &$\alpha=1\times 10^{-08}$ & & & & & \\
& Quantity & $\hat n=1$ & $\hat n=2$  & $\hat n=4$ & $\hat n=8$ & $\hat n=16$ \\   \hline
&Number of iterations $k$& 60 & 50 & 40 & 30 & 20 \\
&walltime in sec & 3846.23 & 4654.34 & 3759.98 & 2835.31 & 1948.4 \\
&${\|\mathcal{Y}^k(T)-y^{target}\|_2\slash\|y^{target}\|_2}$ & $3.93\times 10^{-08}$ & $1.14\times 10^{-08}$&  $ 5.87\times 10^{-09}$& $2.04\times 10^{-09}$& $1.76 \times 10^{-09}$ \\
&$\int_{(0,T)}\|v^k\|_c^2 dt$ & $5.42\times 10^{-07}$ & $4.13\times 10^{-06}$ &$2.97\times 10^{-04}$  & $3.64\times 10^{-03}$ &$2.51\times 10^{-03}$\\\hline
   \end{tabular}
   \caption{Results' summary of Algorithm~\ref{ESDDOC} applied on the distributed control problems $\mathcal P_1^\alpha$ and $\mathcal P_2^\alpha$.}\label{TabDistributecontrol}
 \end{table}  		
\begin{figure}[htbp]
\begin{tabular}{cc}
   \includegraphics[width=6cm,height=4cm]{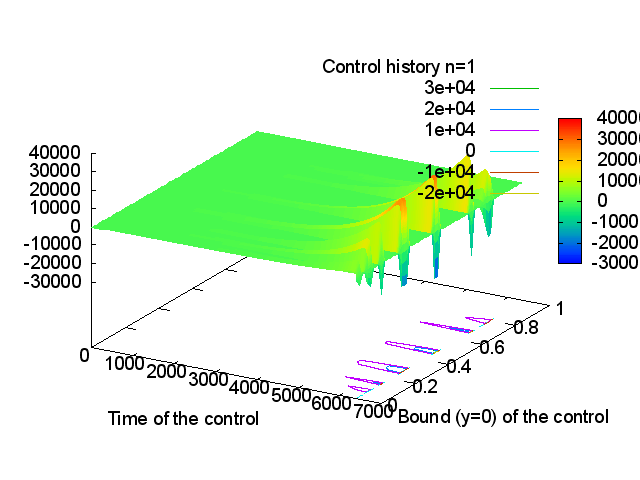} &   \includegraphics[width=6cm,height=4cm]{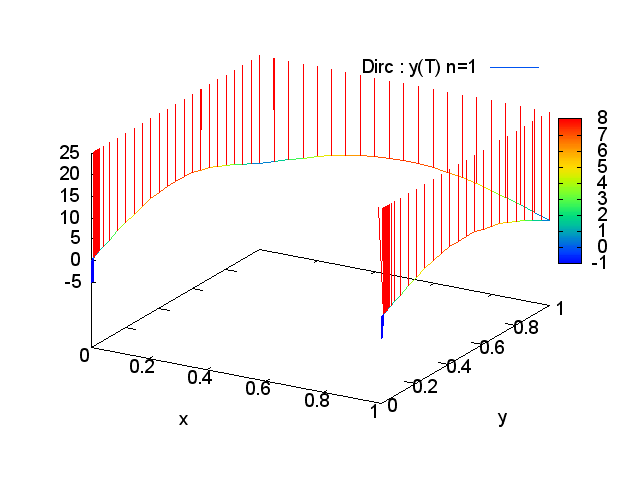} \\
   \includegraphics[width=6cm,height=4cm]{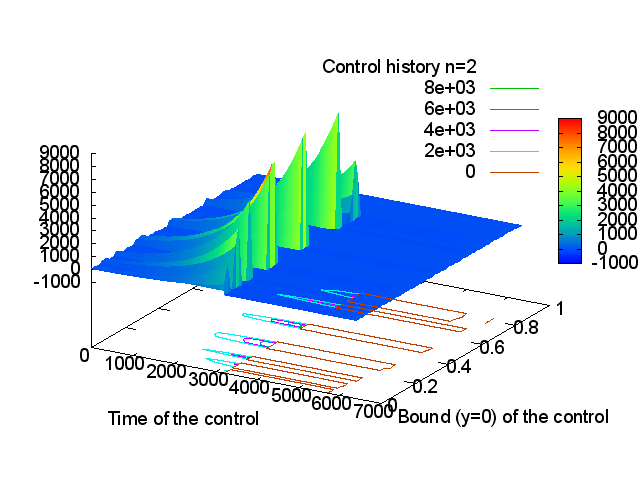} &   \includegraphics[width=6cm,height=4cm]{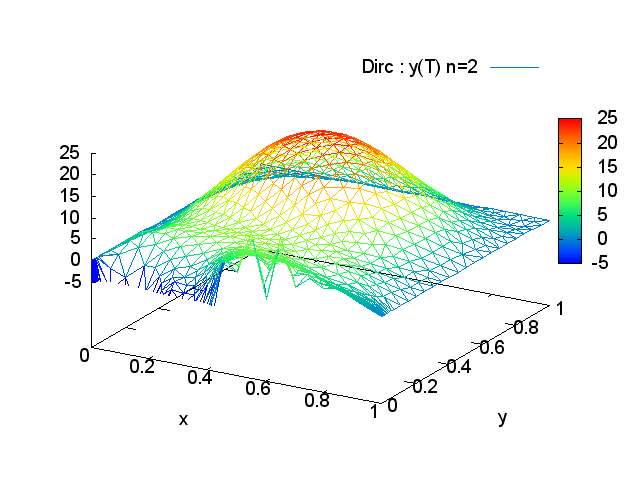} \\
   \includegraphics[width=6cm,height=4cm]{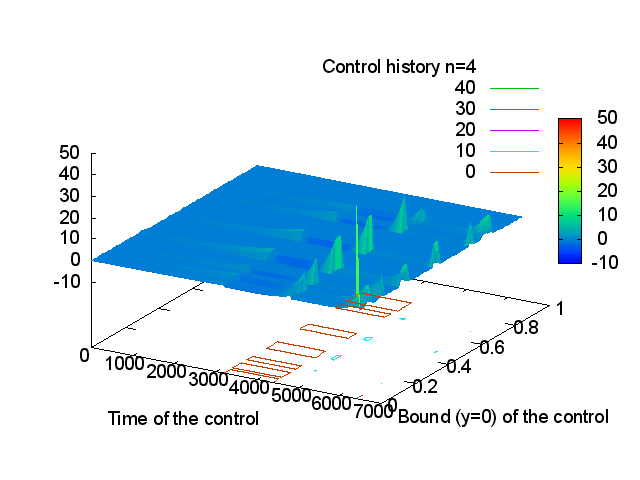} &   \includegraphics[width=6cm,height=4cm]{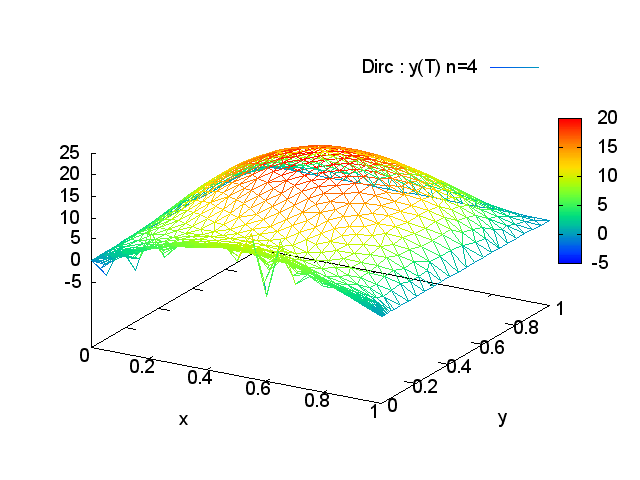} \\
   \includegraphics[width=6cm,height=4cm]{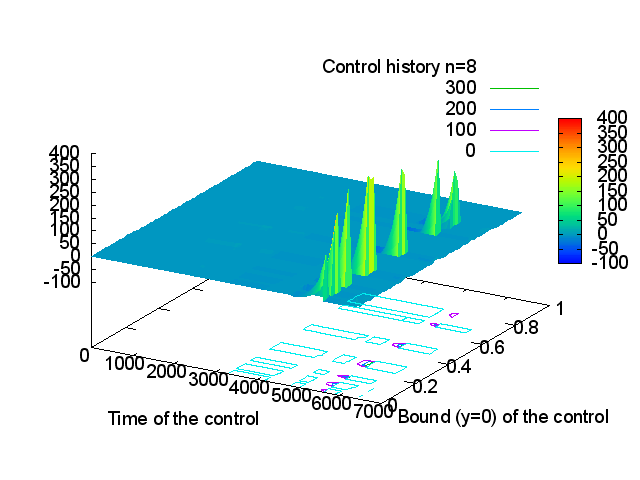} &   \includegraphics[width=6cm,height=4cm]{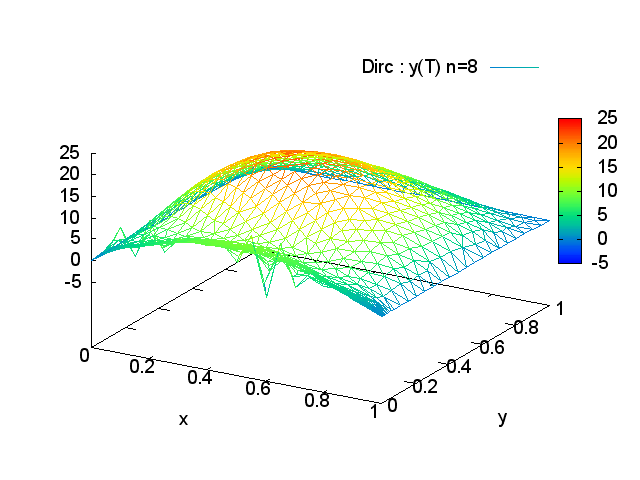} \\        
   \includegraphics[width=6cm,height=4cm]{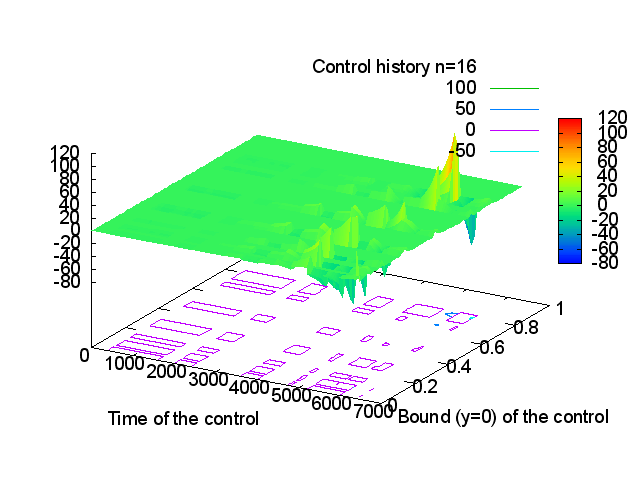} &   \includegraphics[width=6cm,height=4cm]{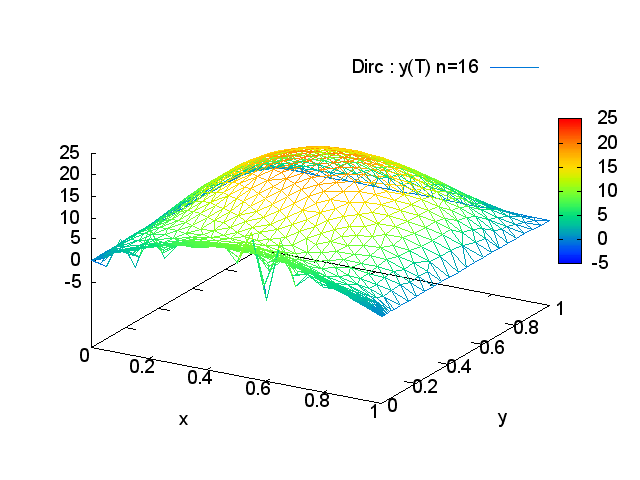} \\           
\end{tabular}
\caption{Several rows value snapshots in $\hat n$ of the Dirichlet optimal control on the left columns and its corresponding controlled final state at time T: $y(T)$ on the right columns. The test case corresponds to the control problem $\mathcal P_3^\alpha$, where $\alpha=1\times 10^{-02}$.}
\label{snapshot_Dirichlet1}
\end{figure}  

\subsubsection{Second test problem: vanishing Tikhonov regularization parameter $\alpha$}
In this section, we are concerned with the "approximate" controllability of the heat equation, where the regularization parameter $\alpha$ vanishes, practically we take $\alpha=1\times 10^{-08}$. In this case, problems $\mathcal P_2^\alpha$ and $\mathcal P_3^\alpha$, in the continuous setting are supposed to be well posed (see for instances~\cite{MR2086173,MR2841484}). However, may not be the case in the discretized settings; we refer for instance to~\cite{survayControlWaveZuaZua} (and reference therein) for more details. 
\begin{figure}[htbp]
   \begin{minipage}[c]{.46\linewidth}
 \includegraphics[width=7cm,height=7cm]{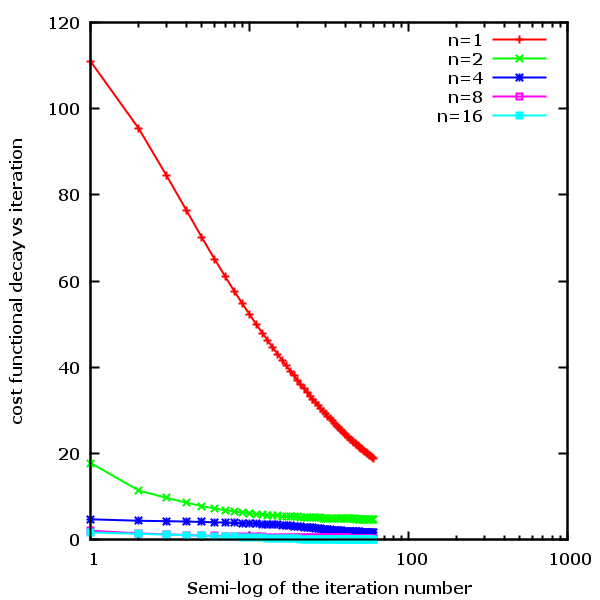}
   \end{minipage} 
   \begin{minipage}[c]{.46\linewidth}
     \includegraphics[width=7cm,height=7cm]{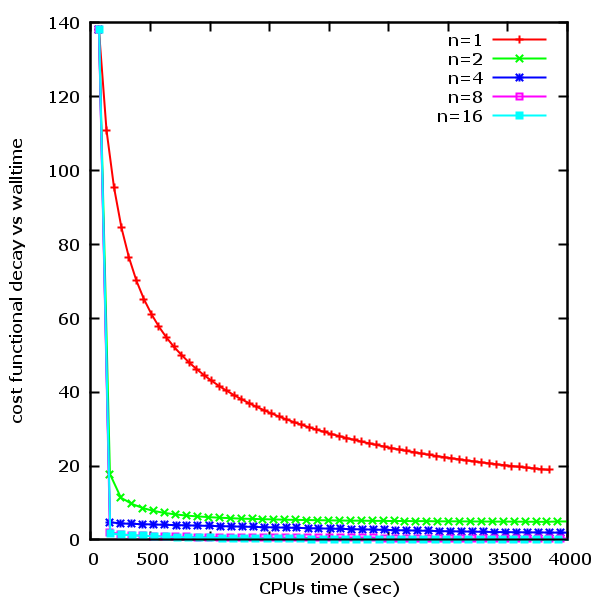}
   \end{minipage}
   \caption{ Normalized and shifted cost functional values versus computational CPU time  for several values of $\hat n$ (i.e. the number of processors used), Distributed control problem $\mathcal P_2^\alpha$ whith $\alpha=1\times 10^{-08}$.}\label{fig1}
\end{figure}

\begin{figure}[htbp]
\begin{tabular}{cc}
   \includegraphics[width=6cm,height=4cm]{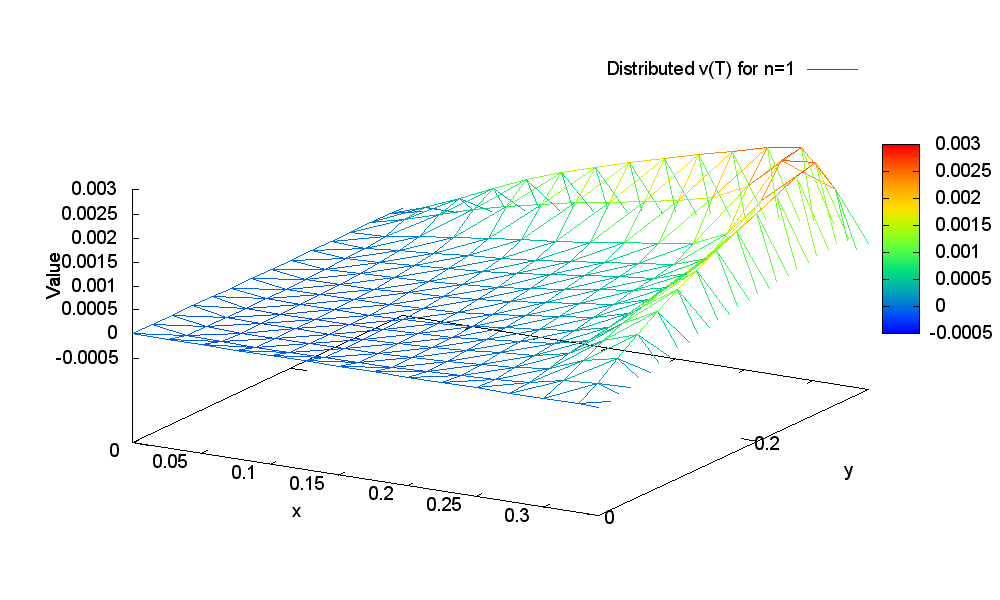} &   \includegraphics[width=6cm,height=4cm]{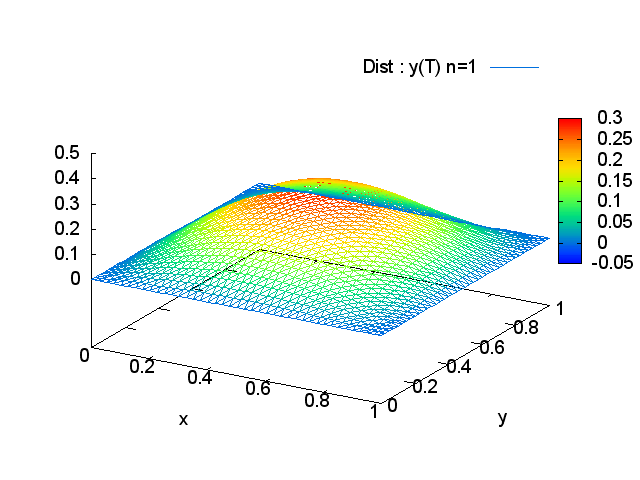} \\
   \includegraphics[width=6cm,height=4cm]{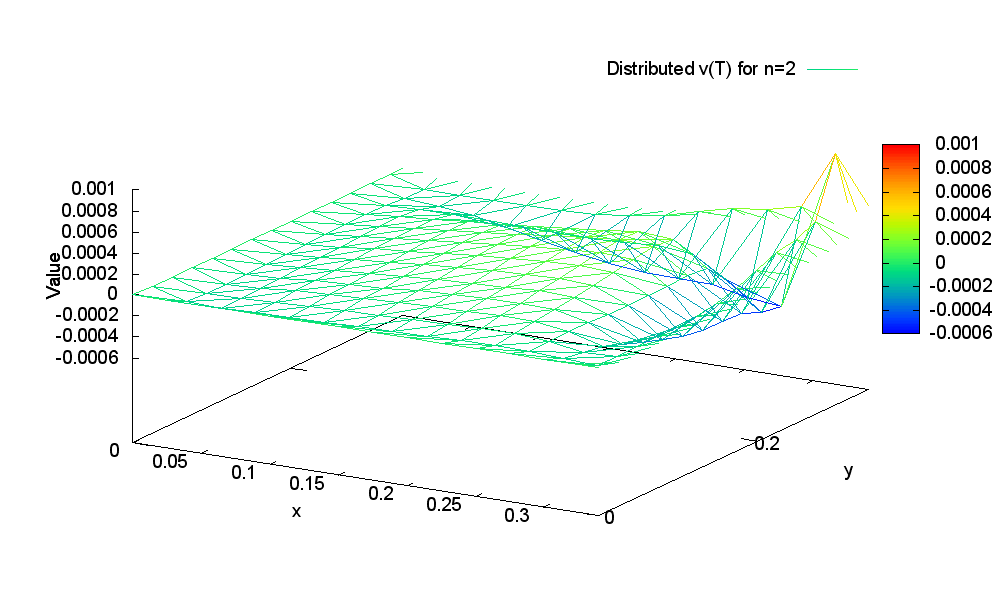} &   \includegraphics[width=6cm,height=4cm]{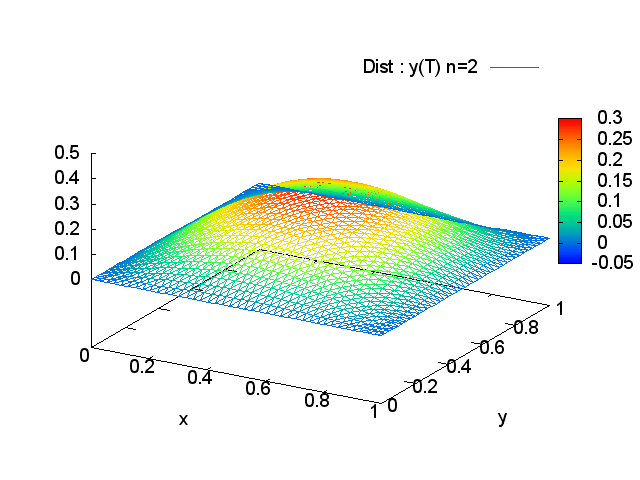} \\
   \includegraphics[width=6cm,height=4cm]{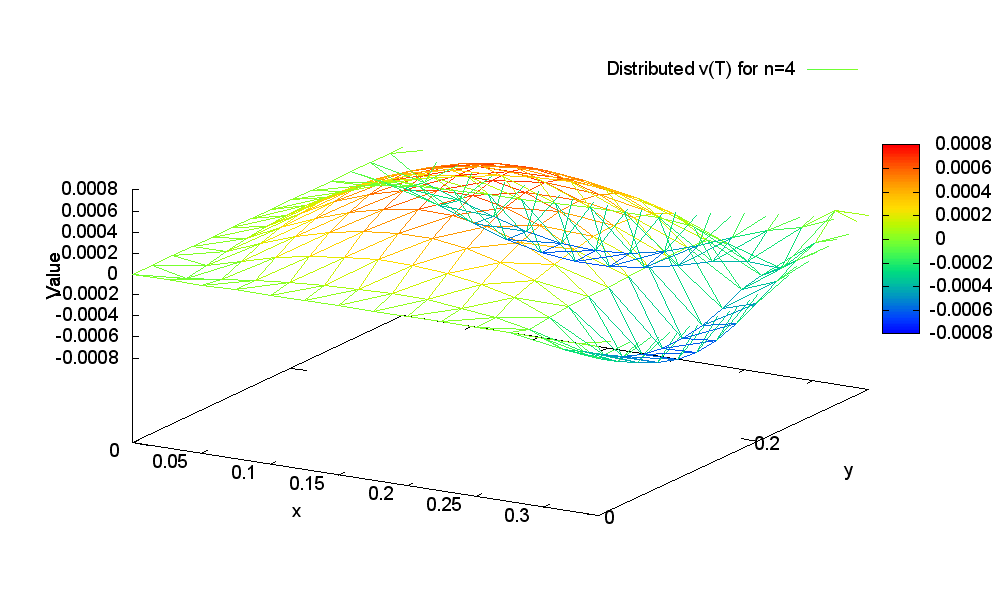} &   \includegraphics[width=6cm,height=4cm]{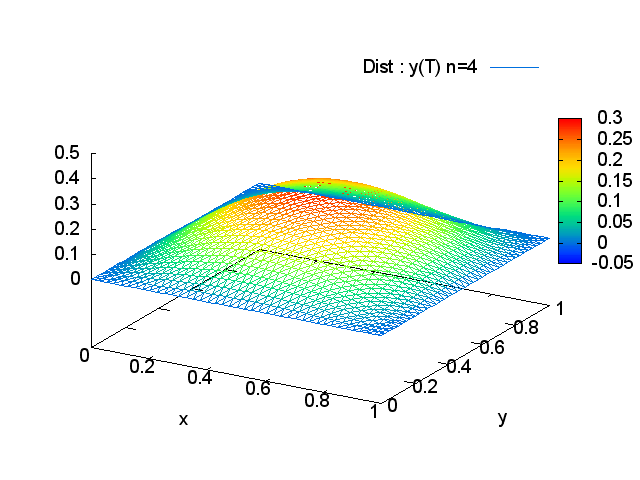} \\
   \includegraphics[width=6cm,height=4cm]{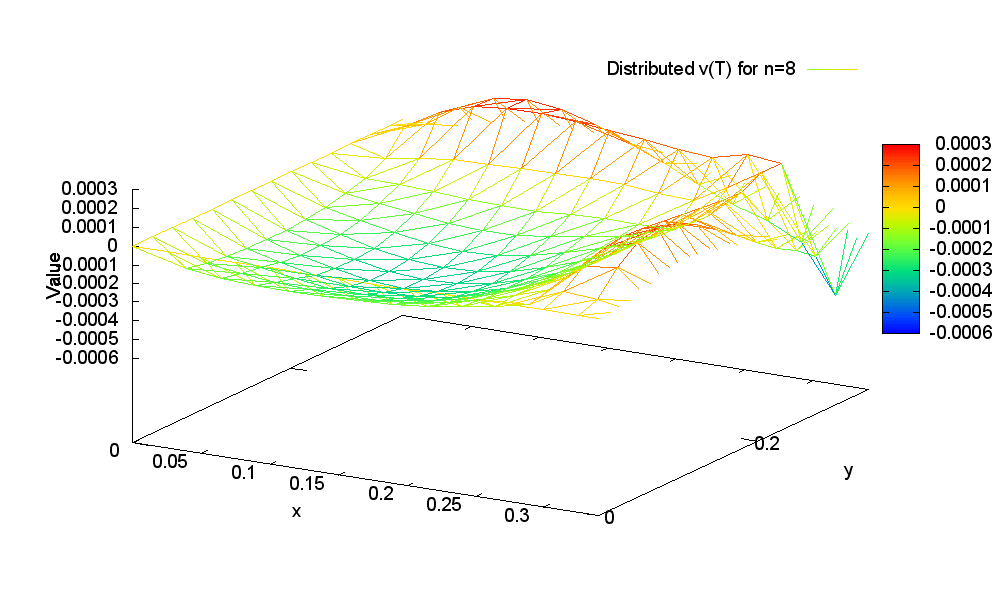} &   \includegraphics[width=6cm,height=4cm]{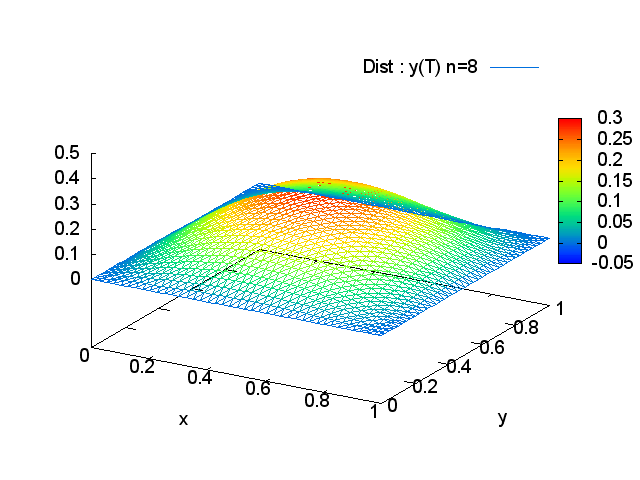} \\        
   \includegraphics[width=6cm,height=4cm]{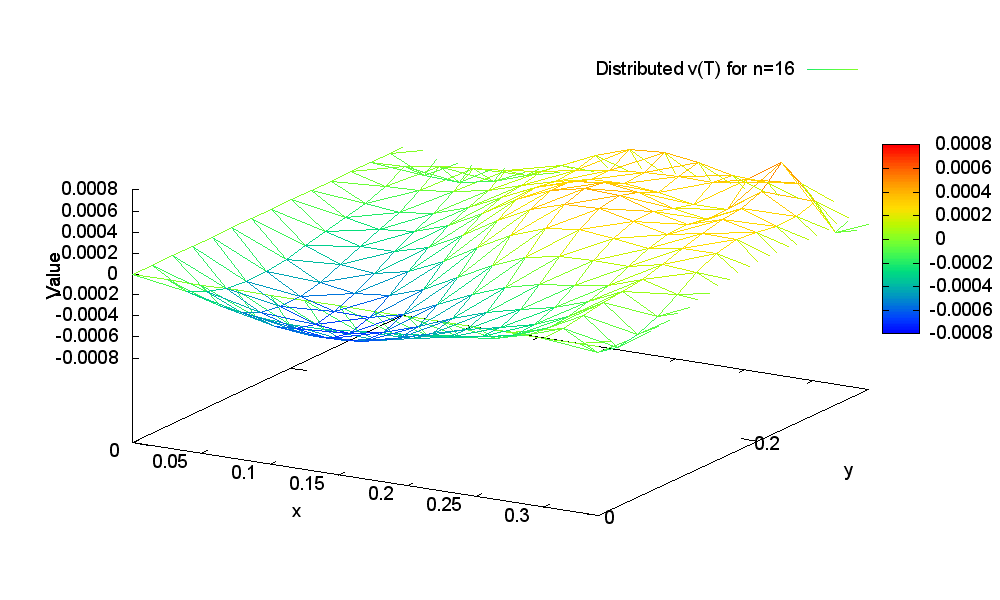} &   \includegraphics[width=6cm,height=4cm]{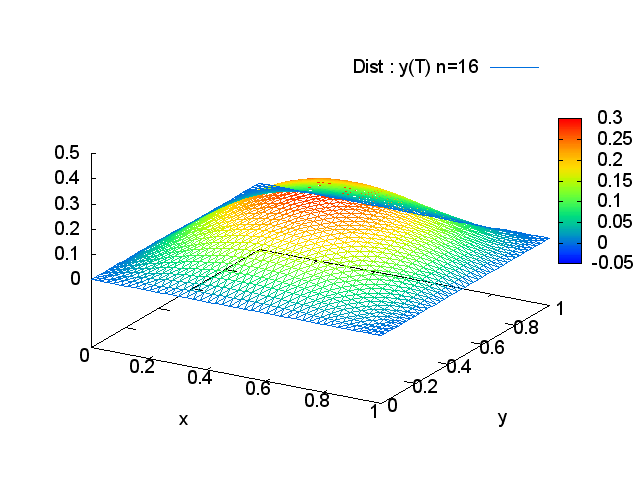} \\           
\end{tabular}
\caption{Several rows value snapshots in $\hat n$ of the distributed optimal control on the left columns and its corresponding controlled final state at time T: $(\mathcal{Y}(T)$ on the right columns. The test case corresponds to the control problem $\mathcal P_2^\alpha$, where $\alpha=1\times 10^{-08}$.}
\end{figure}
\begin{figure}[htbp]
\begin{tabular}{cc}
   \includegraphics[width=6cm,height=4cm]{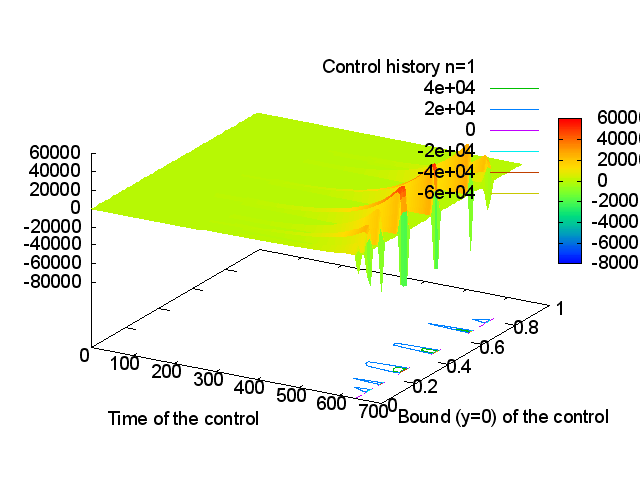} &   \includegraphics[width=6cm,height=4cm]{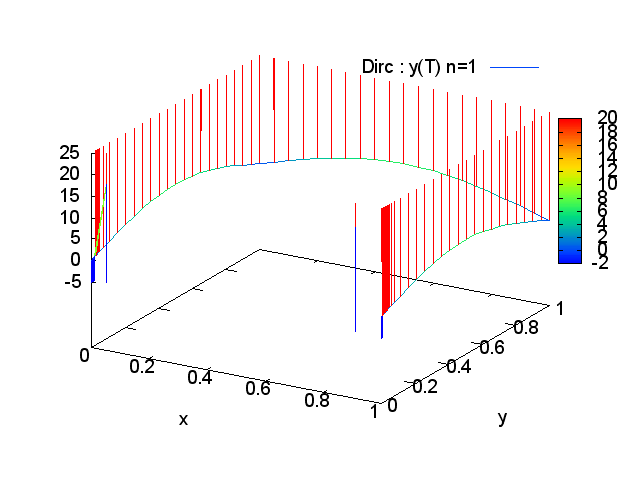} \\
   \includegraphics[width=6cm,height=4cm]{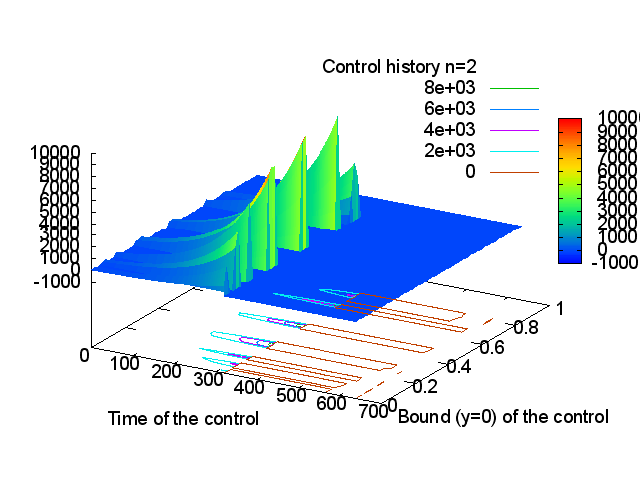} &   \includegraphics[width=6cm,height=4cm]{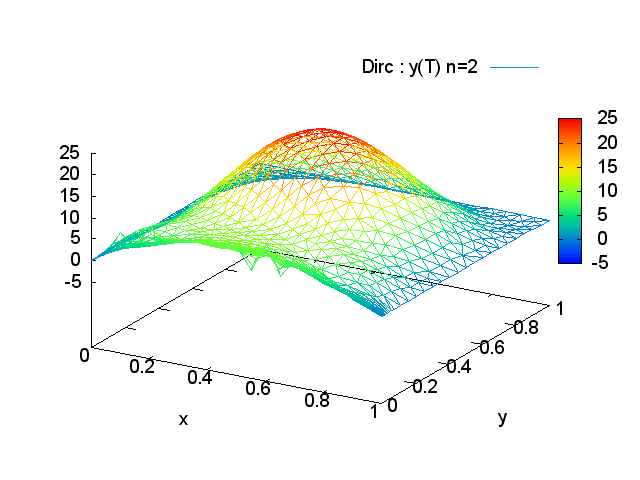} \\
   \includegraphics[width=6cm,height=4cm]{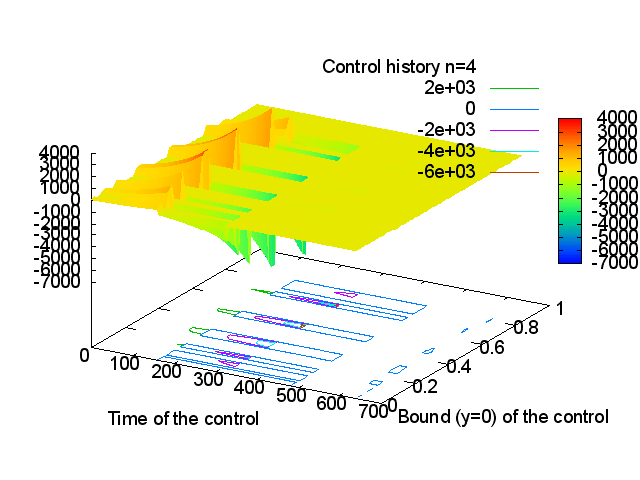} &   \includegraphics[width=6cm,height=4cm]{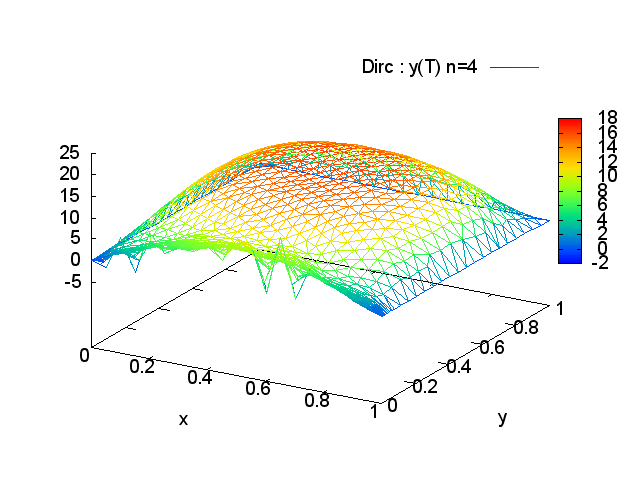} \\
   \includegraphics[width=6cm,height=4cm]{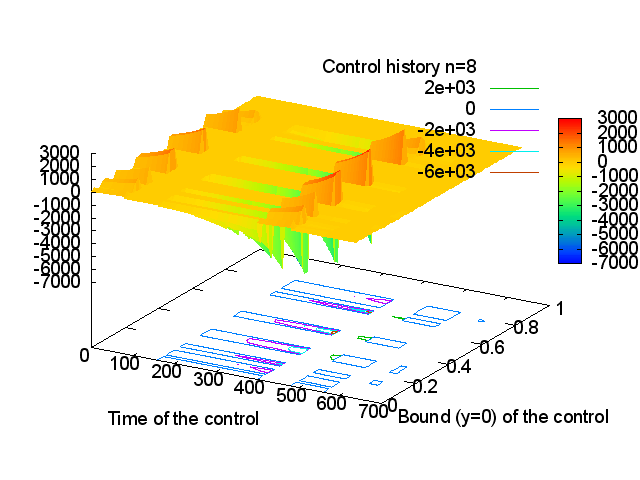} &   \includegraphics[width=6cm,height=4cm]{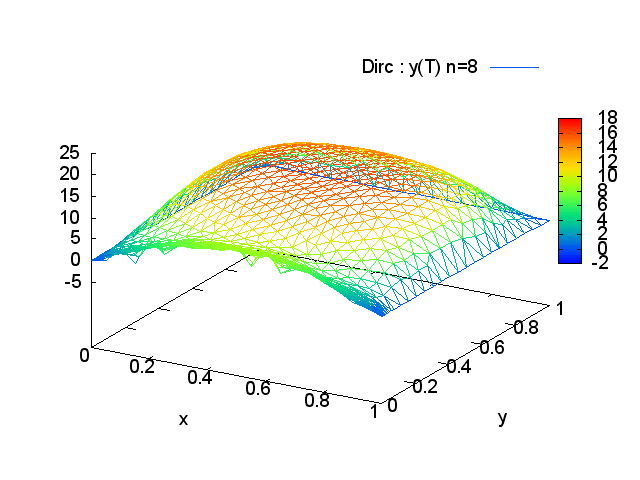} \\        
   \includegraphics[width=6cm,height=4cm]{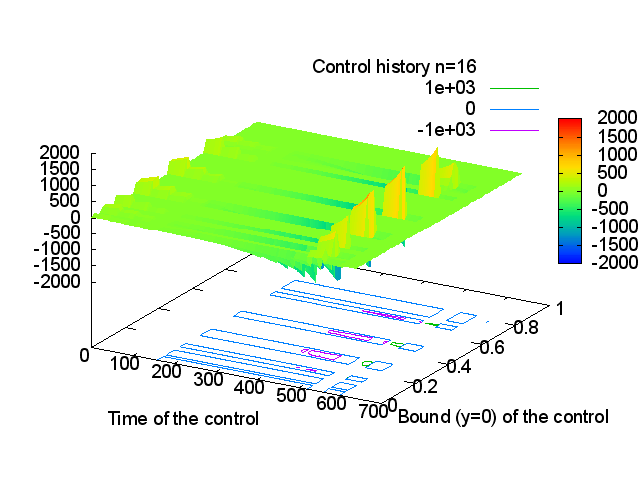} &   \includegraphics[width=6cm,height=4cm]{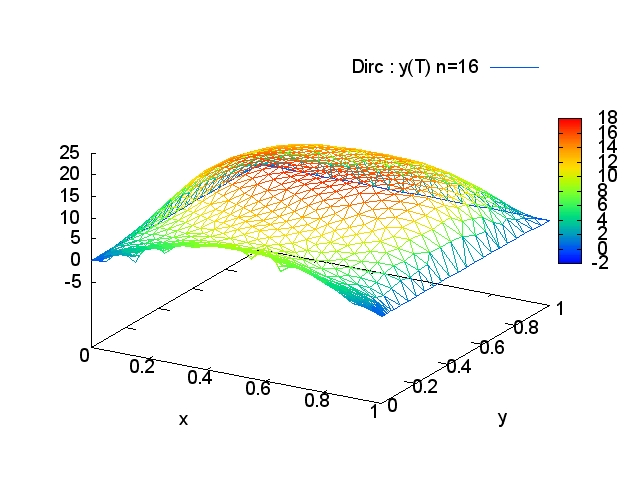} \\           
\end{tabular}
\caption{Several rows value snapshots in $\hat n$ of the Dirichlet optimal control on the left columns and its corresponding controlled final state at time T: $\mathcal{Y}_{\Gamma}(T)$ on the right columns. The test case corresponds to the control problem $\mathcal P_3^\alpha$, where $\alpha=1\times 10^{-08}$..}\label{snapshot_Dirichlet2}
\end{figure}
\begin{table}[htbp]
\tiny
\centering
\begin{tabular}{llccccccc||}
   \hline\hline
Test problem & \multicolumn{3}{c}{Results} \\
   \hline\hline
      $\mathcal P_3^\alpha$ & $\alpha=1\times 10^{-02}$ & & & & & \\
& Quantity & $\hat n=1$ & $\hat n=2$  & $\hat n=4$ & $\hat n=8$ & $\hat n=16$ \\   \hline
&Number of iterations & 40 &40 &30 & 18& 10\\
&walltime in sec  &12453.9&12416.1&9184.28& 5570.54&3158.97 \\
&$\|\mathcal{Y}_{\Gamma}(T)-y^{target}\|_2\slash\|y^{target}\|_2$ &$8.54\times 10^{+06}$&0.472488 & 0.0538509&0.0533826 &0.0534024  \\
&$  \int_{(0,T)}\|v\|_\Gamma^2 dt$ &$2.79\times 10^{+08}$&$1.96\times 10^{+07}$ &31.4193 &138.675 &275.08\\ 
   \hline 
          $\mathcal P_3^\alpha$ & $\alpha=1\times 10^{-08}$ & & & & & \\
& Quantity & $\hat n=1$ & $\hat n=2$  & $\hat n=4$ & $\hat n=8$ & $\hat n=16$ \\   \hline
&Number of iterations &40&40 &30 &27 &10 \\
&walltime in sec  &1248.85 &1248.97 & 916.232&825.791 & 325.16\\
&$\|\mathcal{Y}_{\Gamma}(T)-y^{target}\|_2\slash\|y^{target}\|_2$ &$8.85\times 10^{+06}$ &$0.151086$ & $0.0292072$& $0.0278316$&$0.0267375$  \\
&$  \int_{(0,T)}\|v\|_\Gamma^2 dt$ &$7.92\times 10^{+08}$ &$2.30 \times 10^{+07}$&$1.27\times 10^{+07}$&$1.47\times 10^{+07} $&$1.58\times 10^{+06}$\\\hline
   \end{tabular}
   \caption{Results' summary of Algorithm~\ref{ESDDOC} applied on the Dirichlet boundary control problem $\mathcal P_3^\alpha$.}\label{TabDirichletcontrol}
 \end{table}  

	Table~\ref{TabDistributecontrol} contains the summarized results for the convergence of the distributed control problem. On the one hand, we are interested in the error given by our algorithm for several choices of partition number $\hat n$. On the other hand, we give the $L^2(0,T;L^2(\Omega_c))$ of the control.	We notice the improvement in the quality of the algorithm in terms of both time of execution and control energy consumption, namely the quantity $\int_{(0,T)}\|v^k\|_c^2 dt$. In fact, for the optimal control framework ($\mathcal P_1^\alpha$ and $\mathcal P_2^\alpha$ with $\alpha=1\times 10^{-02}$), we see that, for a fixed stopping criterion, the algorithm is faster and consume the same energy independently of $\hat n$.  In the approximate controllability framework ($\mathcal P_2^\alpha$ with $\alpha=1\times 10^{-08}$ vanishes), we note first that the general accuracy of the controlled solution (see the error ${\|\mathcal{Y}^k(T)-y^{target}\|_2\slash\|y^{target}\|_2}$) is improved as $\alpha=1\times 10^{-08}$ compered with $\alpha=1\times 10^{-02}$. Second, we note that the error diminishes when increasing $\hat n$, the energy consumption rises however. The scalability in CPU's time and number of iteration shows the enhancement of our method when it is applied (i.e. for $\hat n>1$). 	

	Table~\ref{TabDirichletcontrol} contains the summarized results at the convergence of the Dircichlet boundary control problem. This problem is known in the literature for its ill-posedness, where it may be singular in several cases see~\cite{MR2793831} and references therein. In fact, it is very sensitive to noise in the data. We show in Table~\ref{TabDirichletcontrol} that for a big value of the regularization parameter $\alpha$ our algorithm behaves already as the distributed optimal control for a vanishing $\alpha$, in the sense that it consumes more control energy to produce a more accurate solution with smaller execution CPU's time. It is worth noting that the serial case $\hat n=1$ fails to reach an acceptable solution, whereas the algorithm behaves well as $\hat n$ rises.

	We give in Figure.~\ref{snapshot_Dirichlet1} and Figure.~\ref{snapshot_Dirichlet2} several rows value snapshots (varying  $\hat n$) of the Dirichlet control on $\Gamma$. We present in the first column its evolution during $[0,T]$ and on the second column its corresponding controlled final solution $y(T)$ at time $T$; we scaled the plot of the z-range of the target solution in both Figs.\ref{snapshot_Dirichlet1} and \ref{snapshot_Dirichlet2}. 
	  
	 In each row one sees the control and its solution for a specific partition $\hat n$. The serial case $\hat n=1$ leads to a controlled solution which doesn't have the same rank as $y^{target}$, whereas as $\hat n$ rises, we improve the behavior of the algorithm.

	 It is worth noting that the control is generally active only around the final horizon time $T$. This is very clear in Figure.~\ref{snapshot_Dirichlet1} and Figure.~\ref{snapshot_Dirichlet2} (see the first row i.e. case $\hat n=1$). The nature of our algorithm, which is based on time domain decomposition, obliges the control to act in subintervals.  Hence, the control acts more often and earlier in time (before $T$) and leads to a better controlled solution $y(T)$.
\subsubsection{Third test problem: Sever ill-posed problem (no solution)}
	In this test case, we consider a severely ill-posed problem. In fact, the target solution is piecewise Lipschitz continuous, so that it is not regular enough compared with the solution of the heat equation. This implies that in our control problem, both the distributed and the Dirchlet boundary control has no solution. The initial condition and the target solution are given by 
	\begin{equation}\tag{$\mathcal P_4^{\alpha}$}
	\begin{split}
	 y_0(x_1,x_2) &= \pi(sin(\pi x_1) + sin(\pi x_2))\\
	y^{target}(x_1,x_2)  &= \min{\big(x_1,x_2,(1-x_1),(1-x_2)\big)},
	\end{split}
	\end{equation}
	respectively. A plots of the initial condition and the target solutions are given in Figure.~\ref{P4figTARINIT}.
\begin{figure}[htbp]
   \begin{minipage}[c]{.46\linewidth}
      \includegraphics[width=6cm,height=6cm]{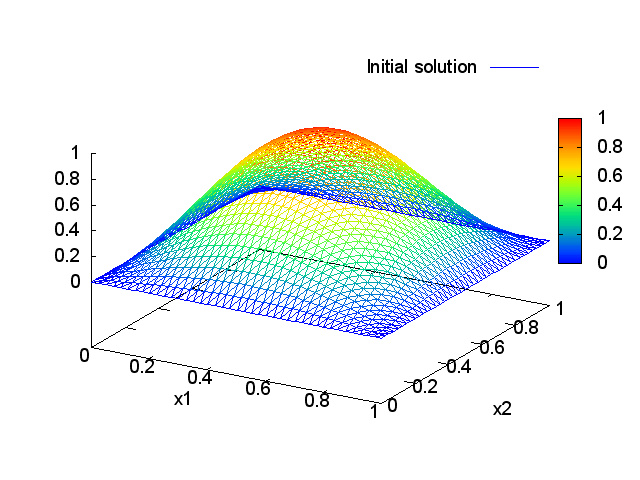}
   \end{minipage} \hfill
   \begin{minipage}[c]{.46\linewidth}
      \includegraphics[width=6cm,height=5.6cm]{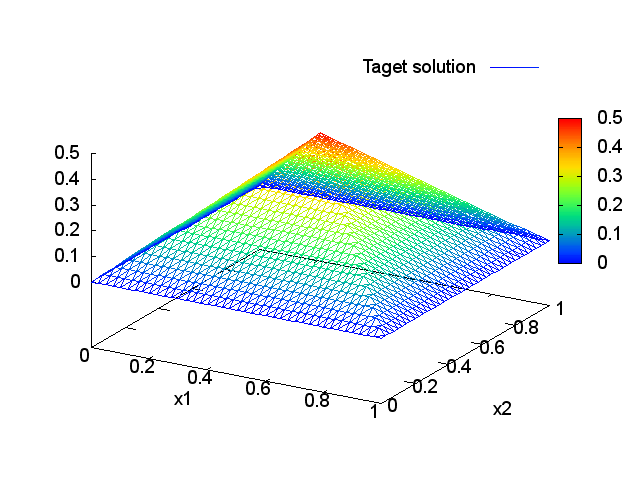}
   \end{minipage}
   \caption{ Graph of initial and target solution for both distributed and Dirichlet boundary control problem.}\label{P4figTARINIT}
\end{figure}
\begin{figure}[htpb]
\begin{tabular}{cc}
   \includegraphics[width=6cm,height=4cm]{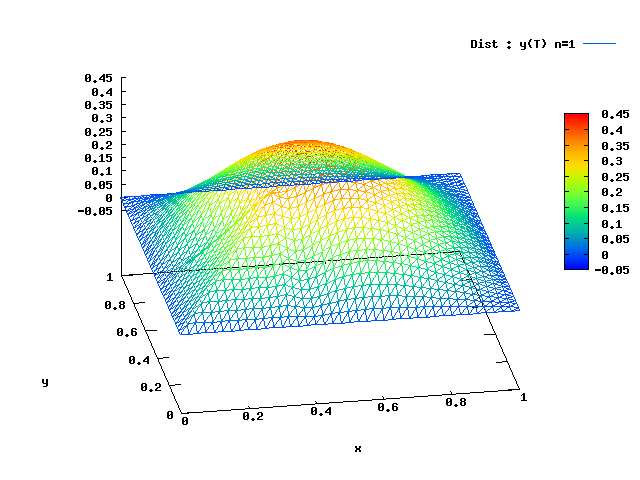} &
   \includegraphics[width=6cm,height=4cm]{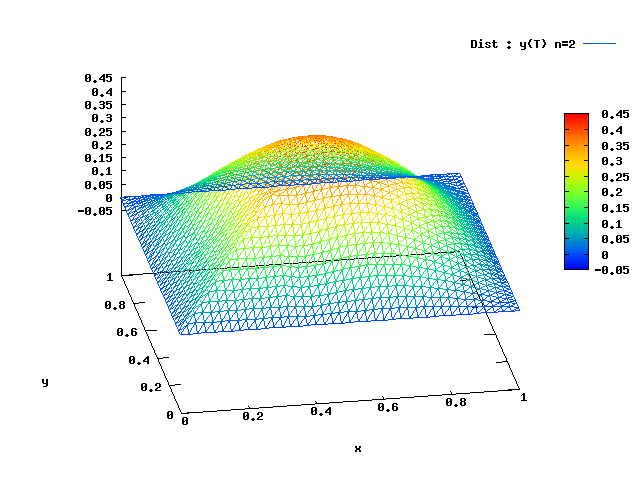} \\
   \includegraphics[width=6cm,height=4cm]{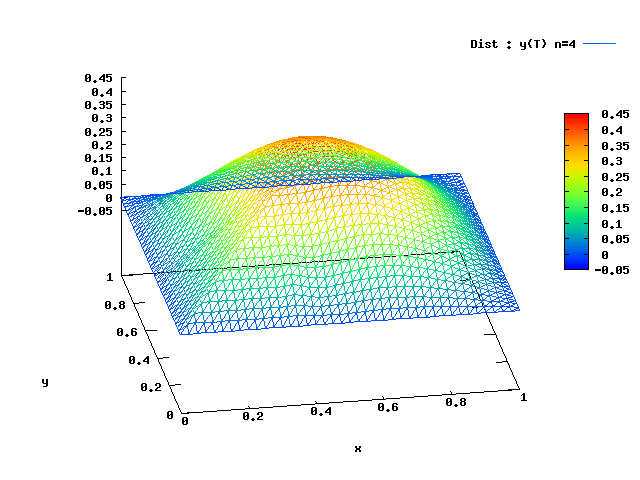} &
   \includegraphics[width=6cm,height=4cm]{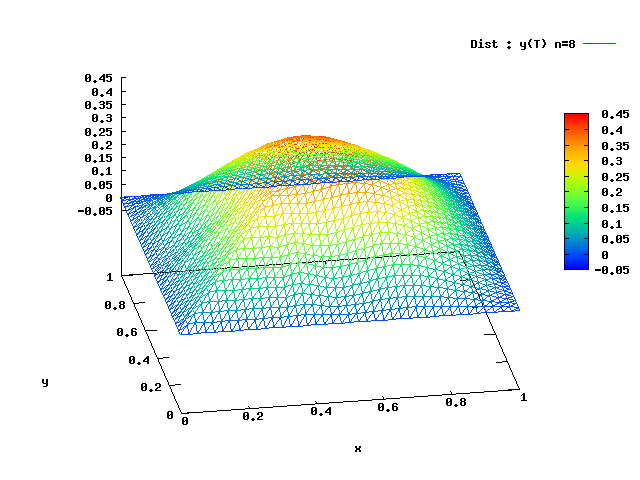} \\        
   \includegraphics[width=6cm,height=4cm]{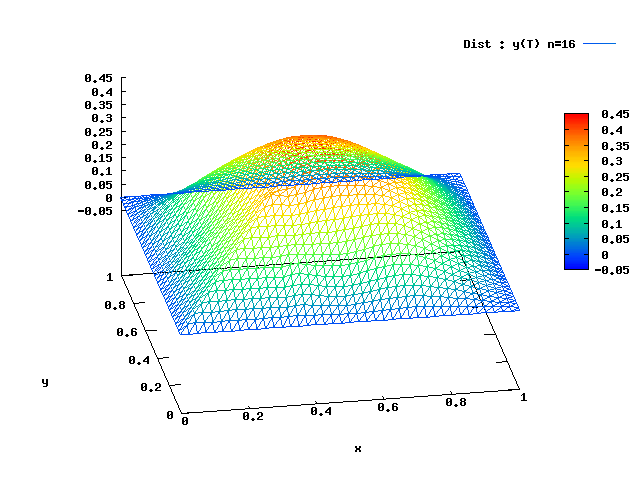} & 
     \includegraphics[width=6cm,height=4cm]{sever_uT.png}        
\end{tabular}
\caption{Several snapshots in $\hat n$ of final state at time T: $\mathcal{Y}(T)$. The test case corresponds to Distributed control sever Ill-posed problem $\mathcal P^{\alpha}_4$.}\label{P4DIS}
\end{figure} 
\begin{figure}[htbp]
\begin{tabular}{cc}
   \includegraphics[width=6cm,height=4cm]{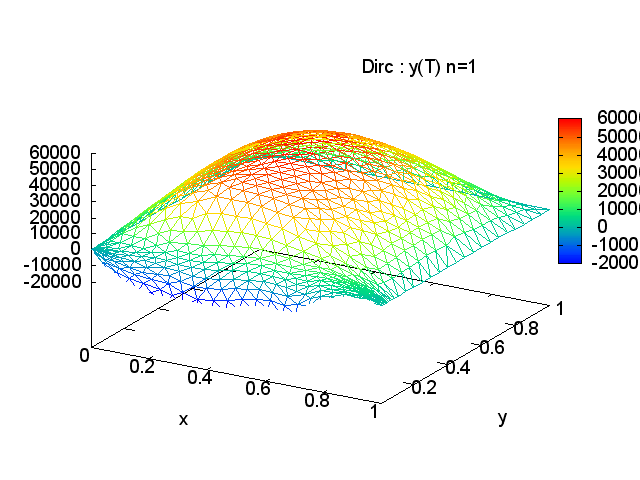} &
   \includegraphics[width=6cm,height=4cm]{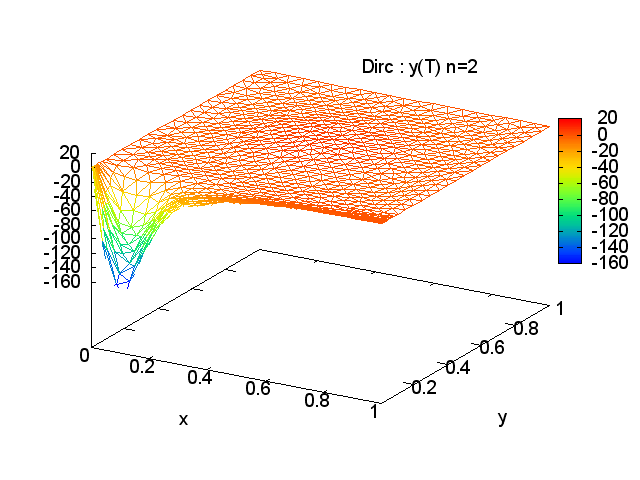} \\
   \includegraphics[width=6cm,height=4cm]{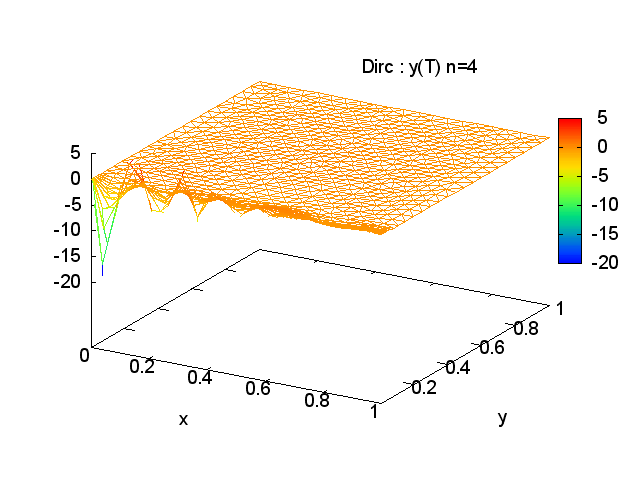} &
   \includegraphics[width=6cm,height=4cm]{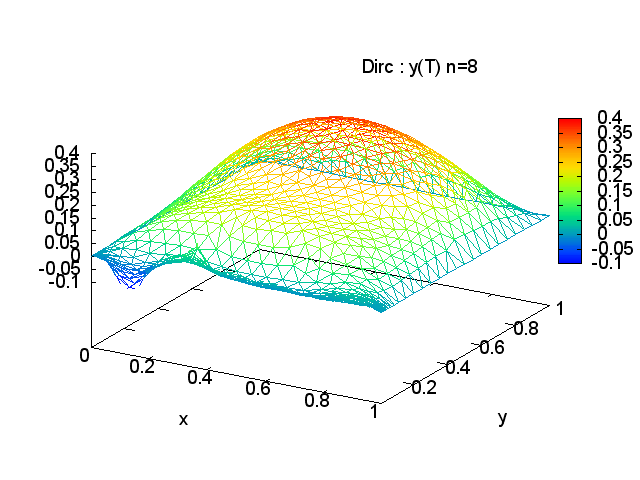} \\        
   \includegraphics[width=6cm,height=4cm]{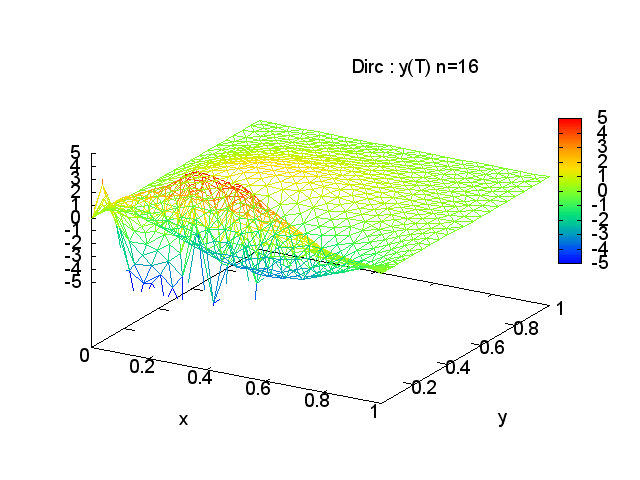}& 
     \includegraphics[width=6cm,height=4cm]{sever_uT.png}        
\end{tabular}
\caption{Several snapshots in $\hat n$ of final state at time T: $\mathcal{Y}_{\Gamma}(T)$. The test case corresponds to Dirichlet control sever Ill-posed problem $\mathcal P^{\alpha}_4$.}\label{P4DIR}
\end{figure} 
\begin{table}[htbp]
\begin{centering}
\tiny
\begin{tabular}{llccccccc||}
   \hline\hline
Test problem & \multicolumn{3}{c}{Results} \\
   \hline\hline
    Distributed control & & & & & & \\
   $\mathcal P_4^\alpha$&$\alpha=1\times 10^{-08}$ & & & & & \\
& Quantity & $\hat n=1$ & $\hat n=2$  & $\hat n=4$ & $\hat n=8$ & $\hat n=16$ \\   \hline
&Number of iterations & 100 &68&60 & 50&40 \\
&walltime in sec&6381.43 & 6303.67  & 5548.16 &4676.83  & 3785.97\\
&$\|\mathcal{Y}(T)-y^{target}\|_2\slash\|y^{target}\|_2$ & $8.16\times 10^{-03}$ &$5.3\times10^{-03}$& $4.74\times10^{-03}$& $3.95\times10^{-03}$&$3.76\times10^{-03}$   \\
&$  \int_{(0,T)}\|v\|_c^2 dt$ &0.34 & 3.01 &52.87 & 52.77 & 2660.87\\ 
   \hline 
    Dirichlet control & & & & & & \\
         $\mathcal P_4^\alpha$ &$\alpha=1\times 10^{-08}$ & & & & & \\
& Quantity & $\hat n=1$ & $\hat n=2$  & $\hat n=4$ & $\hat n=8$ & $\hat n=16$ \\   \hline
&Number of iterations & 25 &25 & 20 & 4 & 1 \\
&walltime in sec &848.58 & 655.40 & 655.40 & 146.19 & 62.87 \\
&$\|\mathcal{Y}_{\Gamma}(T)-y^{target}\|_2\slash\|y^{target}\|_2$ & $2.85\times 10^{+10}$ &3055 & 39.3 & 0.2 & 0.067  \\
&$  \int_{(0,T)}\|v\|_\Gamma^2 dt$ &$6.73\times 10^{+08}$&$2.17\times 10^{+07}$ & 141.62 & 17.84 &26758.5\\\hline
   \end{tabular}
   \caption{Results' summary of Algorithm~\ref{ESDDOC} applied on to both distributed and Dirichlet boundary control for the third test problem $\mathcal P_4^\alpha$.}\label{Tab3}
   \end{centering}
 \end{table}  

	In Figures~\ref{P4DIS} and~\ref{P4DIR} we plot the controlled solution at time $T$ for the distributed and Dirichlet control problems respectively. We remark that for the distributed control problem the controlled solution is smooth except in $\Omega_c$, where the control is able to fit with the target solution. 
\begin{remark}	
	Out of curiosity, we tested the case where the control is distributed on the whole domain. We see that the control succeeds to fit the controlled solution to the target even if it is not $\mathcal{C}^1(\Omega)$. This is impressive and shows the impact on the results of the regions where the control is distributed. 
\end{remark}
	 We note the stability of the method of the distributed test case. However, the Dirichlet problem test case presents hypersensitivity. In fact, in the case of $\hat n=1$ the algorithm succeeds to fit an acceptable shape of the controlled solution, although still far in the scale. We note that the time domain decomposition leads to a control which gives a good scale of the controlled solution.

	 In this severely ill-posed problem, we see that some partitions may fail to produce a control that fit the controlled solution to the target. There is an exemption for the case of $\hat n=8$ partitions, where we have a good reconstruction of the target. The summarized results are given in Tables~\ref{Tab3}.

\subsubsection{Regularization based on the optimal choice of partition}

	 The next discussion concerns the kind of situation where the partition leads to multiple solutions, which is common in ill-posed problems. In fact, we discuss a regularization procedure used as an exception handling tool to choose the best partition, giving the best solution of the handled control problem.
	
	It is well known that ill-posed problems are very sensitive to noise, which could be present due to numerical approximation or to physical phenomena. In that case, numerical algorithm may blow-up and fail. We present several numerical tests for the Dirichlet boundary control, which is a non trivial problem numerically. The results show that in general time domain decomposition may improve the results in several cases. But scalability is not guaranteed as it is for the distributed control. We propose a regularization procedure in order to avoid the blow-up and also to guarantee the optimal choice of partition of the time domain. This procedure is based on a test of the monotony of the cost function. In fact, suppose that we possess 64 processors to run the numerical problem. Once we have assembled the Hessian $H_k$ and the Jacobian $D_k$ for the partition $\hat n=64$, we are actually able to get for free the results of the Hessian and the Jacobian for all partitions $\hat n$ that divide 64. Hence, we can use the quadratic property of the cost functional in order to predict and test the value of the cost function for the next iteration without making any additional computations. The formulae is given by:
\begin{equation*} J(v^{k+1})= J(v^{k})-\frac 1 2 D_{k}^TH_{k}^{-1}D_{k}.\end{equation*}
We present in Algorithm~\ref{reduceHD} the technique that enables us to reduce in rank and compute a series of Hessians and Jacobians for any partition $\hat  n$ that divide the available number of processors. An exemple of the applicability of these technique, on a $4$-by-$4$ SPD matrix, is given in Appendix.
\begin{algorithm}[htbp]
\SetAlgoLined\LinesNumbered
\ShowLn
\KwIn{$\hat n, {H}_{\hat n}^k, D_{\hat n}^{k}$\;}
$n=\hat n$\;
$J_{n\slash2}^{k+1}=J_{ n}^{k+1}$\;
\While{$J_{n\slash2}^{k+1}>J_{n}^{k}$}
{
\For{$i=0;i\leq n; i+2$}
{
	$\big({  D}_{n\slash2}^k\big)_{i} = \big(D_n^k\big)_{i}+\big(D_n^k\big)_{i+1}$\;
	\For{$j=0;j\leq n; j+2$}
	{
		$\big({  H}_{n\slash2}^k\big)_{i,j} = \big(H_n^k\big)_{j}+\big(H_n^k\big)_{j+1}  $\;
	}
}
Estimation of the cost $J_{n\slash2}^{k}$\; 
$n=n \slash 2$\;
}
\caption{Reduce in rank of the partition $\hat n$}\label{reduceHD}
\end{algorithm}

\section{Conclusion}
\label{riahi_mini_15_sec:5}
We have presented in this article a new optimization technique to enhance the steepest descent algorithm via domain decomposition in general and we applied our new method in particular to time-parallelizing the simulation of an optimal heat control problem. We presented its performance (in CPU time and number of iterations) versus the traditional steepest descent algorithm in several and various test problems. The key idea of our method is based on a quasi-Newton technique to perform efficient real vector step-length for a set of descent directions regarding the domain decomposition. The originality of our approach consists in enabling parallel computation where its vector step-length achieves the optimal descent direction in a high dimensional space. Convergence property of the presented method is provided. Those results are illustrated with several numerical tests using parallel resources with MPI implementation.  
	

 \appendix

\section{Kantorovich matrix inequality}
	For the sake of completeness, we give in this appendix the Matrix Kantorovich inequality, that justifies the statement of our convergence proof.  Assume that $\nabla^2 J$ is symmetric positive definite with smallest and largest eigenvalues $\lambda_\text{min}$ and $\lambda_\text{max}$ respectively. We give in the following the matrix version of the famous Kantorovich inequality, which reads:
\begin{theorem}[see~\cite{MR0053389} for more details]
Assume that $\sum_{n=1}^{\hat n}\alpha_{n}=1$ where $\alpha_{n}\geq 0$ and $\lambda_n>0\quad \forall n$; we have thus : 
$$\sum_{n=1}^{\hat n} \alpha_{n}\lambda_{n}\sum_{n=1}^{\hat n}\frac{\alpha_{n}}{\lambda_{n}} 
\leq
\frac{(\lambda_\text{max}+\lambda_\text{min})^2}{4\lambda_\text{max}\lambda_\text{min}}.$$
\end{theorem}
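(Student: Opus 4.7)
The statement is the weighted scalar Kantorovich inequality. My plan is to exploit the convexity of $t \mapsto 1/t$ on the interval $[\lambda_{\min},\lambda_{\max}]$ together with the AM--GM inequality applied to a product whose factors sum to a controlled quantity.

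First, I would record the chord bound for the convex function $f(t)=1/t$: since $f$ is convex on $[\lambda_{\min},\lambda_{\max}]$, its graph lies below the chord joining the endpoints, so for every $t\in[\lambda_{\min},\lambda_{\max}]$,
$$
\frac{1}{t} \;\le\; \frac{\lambda_{\min}+\lambda_{\max}-t}{\lambda_{\min}\lambda_{\max}}.
$$
Applying this at $t=\lambda_n$, multiplying by $\alpha_n\ge 0$, and summing (using $\sum_n\alpha_n=1$) yields
$$
\sum_{n=1}^{\hat n}\frac{\alpha_n}{\lambda_n} \;\le\; \frac{\lambda_{\min}+\lambda_{\max}-\bar\lambda}{\lambda_{\min}\lambda_{\max}},
\qquad \bar\lambda := \sum_{n=1}^{\hat n}\alpha_n\lambda_n.
$$

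Second, I multiply both sides by $\bar\lambda>0$ to obtain
$$
\Bigl(\sum_n\alpha_n\lambda_n\Bigr)\Bigl(\sum_n\tfrac{\alpha_n}{\lambda_n}\Bigr) \;\le\; \frac{\bar\lambda\,(\lambda_{\min}+\lambda_{\max}-\bar\lambda)}{\lambda_{\min}\lambda_{\max}}.
$$
Since $\bar\lambda\in[\lambda_{\min},\lambda_{\max}]$ (it is a convex combination of the $\lambda_n$'s, which themselves lie in that interval), the numerator is a downward parabola in $\bar\lambda$ and is maximized at $\bar\lambda=(\lambda_{\min}+\lambda_{\max})/2$, with maximal value $(\lambda_{\min}+\lambda_{\max})^2/4$. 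By AM--GM one can equivalently observe $\bar\lambda(\lambda_{\min}+\lambda_{\max}-\bar\lambda)\le\bigl((\lambda_{\min}+\lambda_{\max})/2\bigr)^2$. Dividing by $\lambda_{\min}\lambda_{\max}$ delivers the claimed bound
$$
\sum_{n=1}^{\hat n}\alpha_n\lambda_n\sum_{n=1}^{\hat n}\frac{\alpha_n}{\lambda_n} \;\le\; \frac{(\lambda_{\min}+\lambda_{\max})^2}{4\lambda_{\min}\lambda_{\max}}.
$$

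There is no real obstacle: the only step requiring care is the chord inequality, which is a one-line consequence of convexity (or a direct algebraic check: $(\lambda_{\max}-t)(t-\lambda_{\min})\ge 0$ rearranges exactly to the claimed bound on $1/t$). Everything else is elementary. I would, for completeness, briefly note the equality case ($\bar\lambda=(\lambda_{\min}+\lambda_{\max})/2$ with mass concentrated on the two extremes) to reassure the reader that the estimate is sharp.
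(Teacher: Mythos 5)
Your proof is correct and complete. Note, however, that the paper does not actually prove this theorem: it states the weighted scalar Kantorovich inequality with only a pointer to the reference \cite{MR0053389}, and the appendix then uses it as a black box to derive the matrix form $\frac{\|d\|_A^2\,\|d\|_{A^{-1}}^2}{\|d\|_2^4}\le\frac{(\lambda_{\max}+\lambda_{\min})^2}{4\lambda_{\max}\lambda_{\min}}$ by diagonalizing and setting $\alpha_n=({\bf d}_k)_n^2/{\bf d}_k^T{\bf d}_k$. So your argument fills a gap rather than duplicating or diverging from an existing one. The route you take --- the chord (secant) bound $1/t\le(\lambda_{\min}+\lambda_{\max}-t)/(\lambda_{\min}\lambda_{\max})$ for the convex function $1/t$, summed against the weights, followed by maximizing the downward parabola $\bar\lambda(\lambda_{\min}+\lambda_{\max}-\bar\lambda)$ --- is the standard elementary proof, and every step checks out: the chord bound is exactly $(\lambda_{\max}-t)(t-\lambda_{\min})\ge0$ rearranged, and the final AM--GM step is $(a-b)^2\ge0$, so no positivity caveats are needed there. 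Your remark on the equality case (weights $1/2$ on each of $\lambda_{\min}$ and $\lambda_{\max}$) is also accurate and confirms sharpness. One cosmetic point: the theorem as printed only assumes $\lambda_n>0$, so you should state explicitly (as you implicitly do) that $\lambda_{\min}$ and $\lambda_{\max}$ denote $\min_n\lambda_n$ and $\max_n\lambda_n$, which is how the paper uses them when the $\lambda_n$ are eigenvalues of $\nabla^2J$.
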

By diagonalizing the symmetric positive definite operator $H$ we obtain: $H=P\Lambda P^{-1}$, where $P$ is orthonormal operator (i.e. $P^{T}=P^{-1}$). Recall Eq.\eqref{kant-matrix} that we rewrite as:
$$
	\frac{ \|\nabla  J(v^k)\|^2_{\nabla^2 J} \|\nabla  J(v^k)\|^2_{(\nabla^2 J)^{-1}}}{ \|\nabla  J(v^k)\|^4_c }
	\leq
 \frac{({\lambda_{max}}+{\lambda_{min}})^2}{4\lambda_{max}\lambda_{min}}.
$$
	In order to simplify the expression, we shall use $d_k$ instead of $\nabla  J(v^k)$  so that the equation above reads:
\begin{eqnarray*}
\frac{d_{k}^{T}(\nabla^2 J)d_{k}\,\,d_{k}^{T}(\nabla^2 J)^{-1}d_{k}}{(d_{k}^{T}d_{k})^2}=\frac{d_{k}^{T}P^{T} \Lambda Pd_{k}}{d_{k}^{T}P^{T} Pd_{k}}
       \frac{d_{k}P^{T} \Lambda^{-1} Pd_{k}}{d_{k}^{T}P^{T} Pd_{k}}.
\end{eqnarray*}
Let us define ${\bf d}_{k}:=Pd_{k}$, consequently the above equality becomes:
\begin{eqnarray*}
\frac{{\bf d}_{k}^T \Lambda {\bf d}_{k}}{{\bf d}_{k}^{T} {\bf d}_{k}}
       \frac{{\bf d}_{k}^{T} \Lambda^{-1} {\bf d}_{k}}{{\bf d}_{k}^{T}{\bf d}_{k}} 
       =\sum_{n=1}^{\hat n}\frac{({\bf d}_{k})_{n}^{2} }{{\bf d}_{k}^{T}{\bf d}_{k}}\lambda_{n}
\sum_{n=1}^{\hat n} \frac{({\bf d}_{k})_{n}^{2} }{{\bf d}_{k}^{T}{\bf d}_{k}}\frac{1}{\lambda_{n}}.
\end{eqnarray*}
We then denote by $\alpha_n=\frac{({\bf d}_{k})_{n}^{2} }{{\bf d}_{k}^{T}{\bf d}_{k}}$ so that $\sum_{n=1}^{\hat n}\alpha_n=1$, and finally:
 $$
\frac{d_{k}^{T}Ad_{k}\,\,d_{k}^{T}A^{-1}d_{k}}{(d_{k}^{T}d_{k})^2} = \sum_{n=1}^{\hat n} \alpha_{n}\lambda_{n}\sum_{n=1}^{\hat n}\frac{\alpha_{n}}{\lambda_{n}}.
 $$

$\hfill\square$

\begin{example}
Exemple 4-by-4 SPD matrix reduced in rank using the regularization procedure described in Algorithm~\ref{reduceHD}. In order to illustrate the steps of Algorithm 4, we choose a simple example: a matrix 4-by-4 which we are going to reduce recursively in 2-by-2 and in 1-by-1 as follows:
\begin{equation*}
\left(\begin{array}{cccc}
6 & 1 & 2 & 3 \\
1 & 8 & 2 & 4 \\
2 & 2 & 12 & 7 \\
3 & 4 & 7 & 16
\end{array}\right)
\mapsto
\left(\begin{array}{cccc}
(6 & 1) & (2 & 3) \\
(1 & 8) & (2 & 4) \\
(2 & 2) & (12 & 7) \\
(3 & 4) & (7 & 16)
\end{array}\right)
\mapsto
\left(\begin{array}{cc}
7 & 5 \\
9 & 6\\
4 & 19 \\
7 & 23\\ 
 \end{array}\right)
 \mapsto
 \left(\begin{array}{cc} 16 & 11 \\11 & 42 \end{array}\right)
\end{equation*}

\begin{equation*}
 \left(\begin{array}{cc} 16 & 11 \\11 & 42 \end{array}\right)\mapsto
  \left(\begin{array}{c} 27 \\ 53 \end{array}\right) \mapsto  \left(80\right)
\end{equation*}

\end{example}
\bibliographystyle{plain}

\bibliography{biblio}


%

\end{document}